\def\R{\mathbb{R}}
\def\N{\mathbb{N}}
\def\Z{\mathbb{Z}}
\def\S{\Sigma}
\def\C{\mathbb{C}}
\def\Q{\mathbb{Q}}
\def\Pe{\mathbb{P}}
\def\Pe{\mathbb{P}}
\def\esf{\mathbb{S}}
\newcommand{\la}{\looparrowright}
\newcommand{\ben}{\begin{enumerate}}
	\newcommand{\bit}{\begin{itemize}}
		\newcommand{\een}{\end{enumerate}}
	\newcommand{\eit}{\end{itemize}}
\newcommand{\wh}{\widehat}
\newcommand{\Inj}{\mbox{\rm Inj}}
\newcommand{\wt}{\widetilde}
\newcommand{\ed}{\end{document}}
\def\t{{\theta}}
\def\g{{\gamma}}
\def\l{{\lambda}}
\def\be{{\beta}}
\def\ve{{\varepsilon}}
\newcommand\restr[2]{{
  \left.\kern-\nulldelimiterspace 
  #1 
  \vphantom{\big|} 
  \right|_{#2}
  }}
\newtheorem{theorem}{Theorem}
\newtheorem{lemma}[theorem]{Lemma}
\newtheorem{proposition}[theorem]{Proposition}
\newtheorem{remark}[theorem]{Remark}
\theoremstyle{definition}
\newtheorem{definition}[theorem]{Definition}
\begin{document}
 
\begin{title}
	{Generalized Henneberg stable minimal surfaces}
\end{title}

\begin{author}
{David Moya$^*$ \and Joaqu\'\i n P\' erez\thanks{This work is supported in 
part by the IMAG–Maria de Maeztu grant CEX2020-001105-M / AEI / 
10.13039/501100011033,
MICINN grant PID2020-117868GB-I00 and Junta de Andalucía grants P18-FR-4049 and A-FQM-139-UGR18.}}
\end{author}
\date{}

\maketitle
\begin{abstract}
We generalize the classical Henneberg minimal surface by giving an infinite family of complete, finitely
branched, non-orientable, stable minimal surfaces in 
$\R^3$. These surfaces can be grouped into subfamilies
depending on a positive integer (called the {\it 
complexity}), which essentially measures the number of
branch points. The classical Henneberg surface $H_1$ is
characterized as the unique example in the subfamily of 
the simplest complexity $m=1$, while for $m\geq 2$ 
multiparameter families are given. The isometry group 
of the most symmetric example $H_m$ with a given complexity $m\in \N$ is either isomorphic to the 
dihedral isometry group $D_{2m+2}$ (if $m$ is odd) or
to $D_{m+1}\times \Z_2$ (if $m$ is even). Furthermore, 
for $m$ even $H_m$ is the unique solution to the Björling
problem for a hypocycloid of $m+1$ cusps (if $m$ is even),
while for $m$ odd the conjugate minimal surface $H_m^*$
to $H_m$ is the unique solution to the Björling
problem for a hypocycloid of $2m+2$ cusps.
\end{abstract}

\section{Introduction}	
A celebrated result obtained independently by do Carmo \& Peng~\cite{cp1}, 
Fischer-Colbrie \& Schoen~\cite{fs1} and Pogorelov~\cite{po1} establishes
that if $M$ is a complete orientable stable minimal surface in $\R^3$, then
$M$ is a plane. Ros~\cite{ros9} proved that the same characterization holds
without assuming orientability. Nevertheless, a plethora of complete
stable minimal surfaces in $\R^3$ appear if we allow these stable minimal
surfaces to have branch points, with the simplest example being the 
classical Henneberg minimal surface~\cite{hen1}.

The class of complete, finitely connected and finitely branched minimal 
surfaces with finite total curvature (among which stable ones are a particular case) appears naturally in the following 
situation: Given $\ve_0>0$,  $I\in \N\cup \{0\}$ and $H_0,K_0\geq 0$,
let $\Lambda=\Lambda (I, H_0,\ve_0,K_0)$ be the set of immersions 
$F\colon M\la X$ where $X$ is a complete Riemannian 3-manifold with injectivity radius $\Inj(X)\geq \ve_0$ and absolute sectional curvature bounded from above by $K_0$, $M$ is a complete surface, $F$
has constant mean curvature $H\in [0,H_0]$ and Morse index at most~$I$.
The second fundamental form $|A_{F_n}|$ of a sequence $\{ F_n\}_n
\subset \Lambda$ may fail to be uniformly bounded, which 
leads to lack of compactness of $\Lambda$. Nevertheless,
the interesting ambient geometry of the immersions $F_n$ can be proven
to be well organized locally around at most $I$ points $p_{1,n},\ldots ,
p_{k,n}\in M_n$ ($k\leq I$) where $|A_{F_n}|$ takes on arbitrarily large 
local maximum values. 
Around any of these points $p_{i,n}$, one can perform a blow-up analysis and find a limit
of (a subsequence of) expansions $\l_nF_n$ of the $F_n$ (that is, 
we view $F_n$ as an immersion with constant mean curvature $H_n/\l_n$ in the
scaled ambient manifold $\l_nX_n$ for a sequence $\{ \l_n\}_n\subset \R^+$ tending to 
$\infty$). This limit is a complete immersed minimal surface
 $f\colon \Sigma\la \R^3$ with finite total curvature,
passing through the origin $\vec{0}\in \R^3$. 
Recall that such an $f$ has finitely many ends, each of which is 
a multi-valued graph of finite multiplicity (spinning) $s\in \N$, 
over the exterior of a disk in the tangent plane at infinite for $f$ at that end. 
Thus, arbitrarily small almost perfectly formed copies of large compact portions 
of $f(\Sigma)$ can be reproduced in $F_n(M_n)$ around $F_n(p_{i,n})$ 
for $n$ sufficiently large. Complete, finitely connected and finitely branched minimal surfaces with 
finite total curvature in $\R^3$ 
appear naturally when considering clustering 
phenomena in this framework:
It may occur that different blow-up limits of the $F_n$ around
$p_{i,n}$ at different scales $\l_{1,n}>\l_{2,n}$ with $\l_{1,n}/\l_{2,n}\to \infty$ as $n\to \infty$, produce different limits $f_j\colon \S_j\la \R^3$, $j=1,2$, with Index$(f_1)+\mbox{Index}(f_2)\leq I$; 
in this case, all the geometry of $f_1(\Sigma_1)$ collapses around $\vec{0}\in f_2(\S_2)$, and every end of $f_1(\Sigma_1)$
with multiplicity $m\geq 3$ produces a branch point at the origin for $f_2(\Sigma_2)$ of branching order $s-1$. For details 
about this clustering phenomenon and how to organize these blow-up limits
in hierarchies appearing around $\{ p_{i,n}\} _n$, see the paper~\cite{mpe18} by Meeks and the second author.

The main goal of this paper is to generalize the classical Henneberg 
minimal surface $H_1$ to an infinite family of connected, 1-sided, complete, 
finitely branched, stable minimal surfaces in $\R^3$. Branch points are unavoidable if we seek for
complete, non-flat stable minimal surfaces by the aforementioned results~\cite{cp1,fs1,po1,ros9};
1-sidedness is also necessary condition for stability (see 
Proposition~\ref{propos4} below). Our examples can be grouped into 
subfamilies depending on the number of branch points (this will be encoded by an integer $m\in \N$ called the {\it complexity}). The
most symmetric examples $H_m$ in each subfamily of complexity $m$
will be studied in depth (Section~\ref{raices}). 
Depending on the parity of $m$, either $H_m$
or its conjugate minimal surface $H_m^*$ (which does not gives rise to
a 1-sided surface, see Section~\ref{sec5.4}) 
can be viewed as the unique solution of 
a Bj\"{o}rling problem for a planar hypocycloid (Section~\ref{sec5.7}). 
The isometry group of $H_m$ is  isomorphic to the dihedral group $D_{2m+2}$ if $m$ is odd and to the
group $D_{m+1}\times \mathbb{Z}_2$ if $m$ is even (Section~\ref{sec5.8}).
We will also prove that $H_1$ is the only element in the subfamily
with complexity $m=1$ (Theorem~\ref{thm15}), while for $m\geq 2$, $H_m$ can be deformed in multiparameter families: Proposition~\ref{lema14}
gives an explicit 1-parameter family of examples with complexity
$m=2$, interpolating between $H_2$ and a limit which turns out to be $H_1$
(Section~\ref{sec6.2.1}), and the subfamily of examples with complexity
$m=2$ is a two-dimensional real analytic manifold around $H_2$ (Section~\ref{sec6.2.2}).

\section{$1$-sided branched stable minimal surfaces}
We start with the Weierstrass data $(g,\omega)$ on a
Riemann surface $\Sigma$, so that $(g,\omega)$ solves the
period problem and produces a conformal harmonic map $X\colon \Sigma \la \R^3$ given by the classical formula
\begin{equation}
	X=\text{Re}\int(\phi_1,\phi_2,\phi_3)=\text{Re}\int\left(\frac{1}{2}(1-g^2)\omega,\frac{i}{2}(1+g^2)\omega,g\omega\right).
	\label{1}
\end{equation}
We will assume that $X$ is an immersion outside of a locally finite set of points ${\mathcal B}\subset \Sigma$, where $X$ fails
to be an immersion (points of ${\mathcal B}$ are called
{\it branch points} of $X$). Such an $X$ will be called a {\it branched
minimal immersion.} The induced (possible branched) metric is given by
\begin{equation}
ds^2=\frac{1}{4}(1+|g|^2)^2|\omega|^2.
\label{ds2}
\end{equation}

The local structure of $X$ around a branch point in ${\mathcal B}$ is well-known, see e.g. Micallef and
White~\cite[Theorem~1.4]{miwh1} for details. Given $p\in
{\mathcal B}$, there exists a conformal coordinate $(D,z)$
for $\Sigma$ centered at $p$ (here $D$ is the closed unit
disk in the plane), a diffeomorphism $u$ of $D$ and a
rotation $\phi$ of $\R^3$ such that $\phi \circ X\circ u$
has the form
\[
z\mapsto (z^q,x(z))\in \C\times \R\sim \R^3
\]
for $z$ near $0$, where $q\in \N$, $q\geq 2$, $x$ is of class $C^2$, and $x(z)=o(|z|^q)$. 
In this setting, the {\it branching order of $p$} is defined to be $q-1\in \N$.

Let us assume that $X$ produces a $1$-sided branched minimal
surface; this means that there exists an anti-holomorphic
involution without fixed points $I:\Sigma\rightarrow\Sigma$
such that $I\circ \phi_j=\overline{\phi_j}$ for $j=1,2,3$.
This is equivalent to 
\begin{equation}\label{2.3}
-1/\overline{g}=g\circ I,\qquad 
I^*\omega=-\overline{g^2\omega}.
\end{equation}
In particular, $I$ must preserve the set ${\mathcal B}$. 
$\Sigma/\langle I\rangle$ is a non-orientable differentiable surface endowed with a conformal class of metrics,
and the harmonic map $X$ induces another harmonic map 
$\wh{X}\colon \Sigma/\langle I\rangle \la \R^3$
such that $\wh{X}\circ \pi =X$, where $\pi\colon \Sigma \to
\Sigma/\langle I\rangle$ is the natural projection ($\wh{X}$ is a branched
minimal immersion). Reciprocally, every $1$-sided conformal harmonic map 
can be constructed in this way.

\begin{remark}
	{\rm In the particular case that the compactification of $\Sigma$ is $\overline{\C}$, we can assume that $I(z)=-1/\overline{z}$
		and write $\omega=f\, dz$ globally. In this setting, the above equations give
		\begin{equation}\label{2.4}
			-1/\overline{g(z)}=g(-1/\overline{z}),\quad	f\circ I=-\overline{z^2 g^2 f}.
	\end{equation}}
\end{remark}

\begin{definition}
{\rm
Given a $1$-sided conformal harmonic map $\wh{X}\colon 
\Sigma/\langle I\rangle \la \R^3$, we denote by $\Delta$, 
$|A|^2$ the Laplacian and squared norm of the second 
fundamental form of $\wh{X}$. The {\it index} of $\wh{X}$ is defined
as the number of negative eigenvalues of the elliptic,
self-adjoint operator $L=\Delta+|A|^2$ (Jacobi operator of 
$X$) defined over the space of compactly supported smooth
functions $\phi \colon 	\Sigma \to \R$ such that $\phi \circ 
I=-\phi $. 
		$\wh{X}$ is 	said to be {\it stable} if its index is zero. 
		
		In the case
		$\wh{X}$ is finitely branched, the eigenvalues and eigenfunctions
		of the Jacobi operator of $X$ are well defined via
		a variational approach, since the codimension of the singularity set ${\mathcal B}$ is two (see~\cite{ty}), and
		stability also makes sense.}
\end{definition}

The next result is proven by Meeks and the second author in~\cite{mpe18}.
\begin{proposition}
\label{propos4}
Let $X\colon \Sigma \la \R^3$ be complete, non-flat, finitely branched
minimal immersion with branch locus
${\mathcal B}\subset \Sigma$. Then:
\begin{enumerate}
\item \cite[Proposition 3]{mpe18} 
If $X$ is stable, then $\Sigma $ is non-orientable and
$X({\mathcal B})$ contains more than 1 point.
\item \cite[Remark~3.6]{mpe18} Suppose that $\Sigma$ is non-orientable, $X$ has finite total
curvature and its extended unoriented Gauss map $G\colon \Pe^2
=\esf^2/\{ \pm 1\}\to \Pe^2$ 
is a diffeomorphism. Then, $X$ is stable.
\end{enumerate}
\end{proposition}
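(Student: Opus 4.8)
The plan is to transfer the second variation to the Gauss map image, where the problem becomes an explicit eigenvalue question on the round sphere. Write $N\colon\Sigma\to\esf^2$ for the oriented Gauss map, $ds^2$ for the induced metric, and $d\sigma^2:=N^\ast g_{\esf^2}=-K\,ds^2=\tfrac12|A|^2\,ds^2$ for the pulled-back spherical metric. Using $|A|^2=-2K$, the conformal invariance of the Dirichlet integral in dimension two, and $dA_{d\sigma^2}=-K\,dA_{ds^2}$, the second variation form rewrites as
\[
Q(\phi)=\int_\Sigma\big(|\nabla\phi|^2-|A|^2\phi^2\big)\,dA=\int_\Sigma|\nabla_{d\sigma^2}\phi|^2\,dA_{d\sigma^2}-2\int_\Sigma\phi^2\,dA_{d\sigma^2},
\]
so that stability is equivalent to $-(\Delta_{d\sigma^2}+2)\ge 0$ on the admissible test functions. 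Everything is thereby reduced to the spectrum of $\Delta_{d\sigma^2}+2$ on $(\Sigma,d\sigma^2)$, and the two parts of the statement correspond to which test functions are admissible and to the global geometry of $d\sigma^2$.

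For part (2), finite total curvature makes $(\Sigma,d\sigma^2)$ conformally compact, and the hypothesis that $G\colon\Pe^2\to\Pe^2$ is a diffeomorphism means exactly that $N$ is a global conformal diffeomorphism onto $\esf^2$; hence $N$ is an isometry from $(\Sigma,d\sigma^2)$ onto the round $\esf^2$. Transporting $Q$ by this isometry, and noting that $N\circ I=-N$ turns the admissibility condition $\phi\circ I=-\phi$ into oddness under the antipodal map, the admissible functions become the spherical harmonics of odd degree $k=1,3,5,\dots$. On these $\Delta_{\esf^2}+2$ has eigenvalues $2-k(k+1)\le 0$, vanishing only on the three linear functions ($k=1$, i.e.\ the coordinate functions of $N$), so $Q\ge 0$ and $X$ is stable. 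The branch points and ends are inert here because they form a set of zero capacity (codimension two), so admissible functions can be cut off near them without affecting $Q$ in the limit.

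For part (1) the same identity isolates the role of orientability. If $\Sigma$ were orientable there would be no involution $I$, so the constant function would be admissible; when $\int_\Sigma(-K)\,dA<\infty$ it gives $Q(1)=-2\,\mathrm{Area}_{d\sigma^2}(\Sigma)<0$ by non-flatness, whence $X$ is unstable, and in general one invokes the branched extension of the do Carmo--Peng, Fischer-Colbrie--Schoen and Pogorelov/Ros theorems that an orientable complete stable minimal surface is flat. Non-flatness therefore forces $\Sigma$ to be non-orientable, the point being precisely that the constant ($k=0$) mode is excluded once only anti-invariant functions are allowed. To see that $X(\mathcal B)$ contains more than one point I would use the Jacobi field generated by the homotheties of $\R^3$, namely the support function $u=\langle X,N\rangle$, which satisfies $Lu=0$ and is admissible because $X\circ I=X$ while $N\circ I=-N$ give $u\circ I=-u$. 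If $X(\mathcal B)$ reduced to a single point $p_0$, translate $p_0$ to the origin so that $u$ vanishes at every branch point; one then argues that on a complete non-flat surface $u$ must change sign, and uses a nodal domain of $u$ together with its $I$-image (each carrying first Dirichlet eigenvalue zero for $-L$) to build an admissible test function with $Q<0$, contradicting stability.

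I expect this last step to be the main obstacle: controlling the sign and nodal structure of $\langle X,N\rangle$ under the single-branch-value hypothesis and ruling out degenerate configurations, together with the capacity estimates that must simultaneously neutralize the branch locus $\mathcal B$ and the (possibly infinite-total-curvature) ends. By contrast, the conformal reduction and the eigenvalue bookkeeping on $\esf^2$ that drive part (2) and the orientability half of part (1) should be routine once the identity above is in place.
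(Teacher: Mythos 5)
The paper itself contains no proof of Proposition~\ref{propos4}: both items are simply quoted from \cite{mpe18}, so your attempt can only be judged on its own merits. Your part (2) is correct and is the standard argument (certainly the one behind \cite[Remark~3.6]{mpe18}): the conformal rewriting of the index form, the identification of the compactified oriented cover with the round $\esf^2$ via the degree-one holomorphic Gauss map, the translation of admissibility into oddness under the antipodal map, and the bound $k(k+1)\geq 2$ for odd spherical harmonics, with branch points and punctures neutralized by zero capacity.

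Part (1) is where the genuine gaps are, and they sit exactly at the step you called routine and the step you flagged as the main obstacle. For non-orientability you invoke ``the branched extension'' of the do Carmo--Peng, Fischer-Colbrie--Schoen, Pogorelov and Ros theorems. No such off-the-shelf theorem exists: those results concern immersions, and they cannot be applied to $\Sigma\setminus\mathcal{B}$ because that surface is incomplete---this incompleteness is precisely why non-flat branched stable examples (Henneberg) exist at all, so the extension is not a formality. Your explicit argument ($Q(1)=-2\,\mathrm{Area}_{d\sigma^2}(\Sigma)<0$) is fine but presupposes finite total curvature, which is not among the hypotheses of item~1; deducing finite total curvature (or flatness) from stability in the branched setting is the actual content of \cite[Proposition~3]{mpe18}, so as written the reduction is circular.

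The second gap concerns the claim that $X(\mathcal{B})$ has at least two points. The sign change of $u=\langle X-p_0,N\rangle$ can actually be rescued when $\mathcal{B}\neq\emptyset$: $u$ vanishes at every branch point, and in a local conformal coordinate $u$ solves $\Delta_0 u+qu=0$ with potential $q=8|g'|^2(1+|g|^2)^{-2}$ smooth and bounded across $\mathcal{B}$, so by the strong maximum principle a one-signed $u$ would vanish identically, making $X$ a cone over $p_0$, hence flat, a contradiction. But the decisive implication ``sign-changing Jacobi function $\Rightarrow$ instability'' is valid only when $u$ restricted to a nodal domain is an admissible ($W^{1,2}_0$) test function. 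Here the nodal domains are noncompact and $u$ need not decay at the ends: it is bounded on planar ends but grows logarithmically on catenoidal ends and polynomially on ends of higher spinning, so cutting $u$ off on a nodal domain does not produce $Q<0$. This is the heart of item~1 and it remains unproved in your proposal, as you yourself anticipated. Finally, the case $\mathcal{B}=\emptyset$ (in which $X(\mathcal{B})$ contains no point at all and your $u$-argument is vacuous) must be treated separately, e.g.\ by Ros's theorem \cite{ros9} for unbranched non-orientable surfaces.
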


\section{The Bj\"{o}rling problem}
\label{Bjorling}
We next recall the basics of the classical Bj\"{o}rling problem, to be used later.
Let $\gamma \colon I\subset \R\rightarrow \R^3$ be an analytic regular curve and $\eta$ an analytic vector field along $\gamma$ 
such that $\langle \gamma(t),\eta(t)\rangle=0$ and $\|\eta(t)\|=1$ for all $t\in I$. 
The classical result due to  E.G. Bj\"{o}rling asserts that the following parametrization generates a minimal surface 
$S$ which contains $\gamma$ and has $\eta$ as unit normal vector along $\gamma$:
\[
X(u,v)=\text{Re}\left(\widetilde{\gamma}(w)- i\int_{w_0}^w \widetilde{\eta}(w)\times \widetilde{\gamma}'(w)\,dw\right),
\]
where $\widetilde{\gamma},\widetilde{\eta}$ are analytic extensions of the corresponding $\gamma,\eta$  and $w=u+i v$ 
is defined in a simply connected domain $\Omega\subset \C$ with $I\subset \Omega$. 
In particular, the surface $S$ is locally unique around $\gamma$ with this data (it is called the solution to the
Bj\"{o}rling problem with data $\g,\eta$).

In what follows, we will consider different Bj\"{o}rling problems for analytic planar curves $\g\subset 
\{z=0\}$ that fail to be 
regular at finitely many points. The above construction can be applied to each of the regular arcs of these curves
after removing the zeros of $\g'$. In all our applications, $\eta$ will be taken as
the (unit) normal vector field to $\g$ as a planar curve.

\section{The classical Henneberg surface}
The classical Henneberg minimal surface $H_1$ is the
$1$-sided, complete, stable minimal surface in $\R^3$ given by the Weierstrass data:
\begin{equation}
g(z)=z,\quad \omega=z^{-4}(z\pm i)(z\pm 1)dz=z^{-4}(z^4-1)dz, \quad z\in \overline{\C}-\{0,\infty\}.
\label{H1}
\end{equation}
$H_1$ has two branch points\footnote{Branch points of $H_1$
all have order 1 (locally the surface winds twice around
the branch point); this follows from direct computation, or
from Proposition 21 in White' s "Lectures on minimal
surfaces theory".} 
at $[1]=\{1,-1\},[i]=\{i,-i\}\in \mathbb{P}^2=\overline{\C}/\langle A\rangle $, where $A(z)=-1/\overline{z}$ is the antipodal map.
 By Proposition~\ref{propos4}, $H_1$ is stable.

$H_1$ can be
conformally parameterized (up to translations) by equation~\eqref{1}.
 After translating $X$ so that $X(e^{i\pi/4})=\vec{0}$,
the branch points of $H_1$ are mapped by $X$ to $(0,0,\pm 1)$ and a parametrization of $H_1$ in polar coordinates $z= re^{i\theta}$ is given by
\begin{equation}
X(r e^{i\theta})=
\left(\begin{array}{c}
	\frac{\cos\theta}{2}(r-\frac{1}{r})-\frac{ \cos (3\theta)}{6}(r^{3}-\frac{1}{r^{3}})
	\\
	-\frac{\sin \theta }{2}(r-\frac{1}{r})-\frac{ \sin (3\theta)}{6}(r^{3}-\frac{1}{r^{3}})
	\\
	\frac{\cos(2\theta)}{2}(r^{2}+\frac{1}{r^{2}})
\end{array}\right).
\label{H1X}
\end{equation}
Since $X(e^{i\theta})=(0,0,\cos(2\theta))$, then $X$ maps the unit circle into the vertical segment $\{(0,0,t)|\,t\in [-1,1]\}$. 
In this way,  $\theta\in [0,2\pi]\mapsto X(e^{i\theta})$ bounces between the two branch points of $H_1$ (observe that
the complement of this closed segment in the $x_3$-axis is not contained in $H_1$), see Figure~\ref{fig:test}.
\begin{figure}[h]
\centering
\begin{subfigure}{.5\textwidth}
\centering
\includegraphics[width=.8\linewidth]{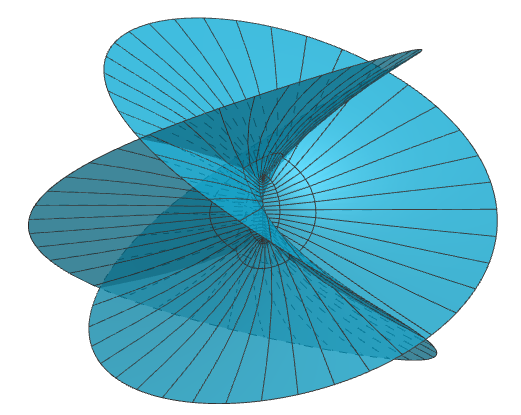}
\label{fig:sub1}
\end{subfigure}%
\begin{subfigure}{.5\textwidth}
\centering
\includegraphics[width=.8\linewidth]{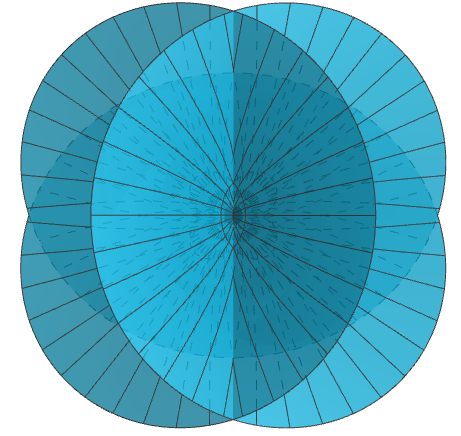}
\label{fig:sub2}
\end{subfigure}
\caption{The Henneberg surface $H_1$. After a traslation,
the branch points of $H_1$ are contained in the $x_3$-axis. $H_1$ contains two horizontal, orthogonal lines that bisect
the $x_1$- and $x_2$-axis. Left: Intersection of $H_1$ with
a ball of radius 8. Right: top view of $H_1$.}
\label{fig:test}
\end{figure}

\subsection{Isometries of $H_1$}
It is straightforward to check that
\begin{enumerate}

\item The antipodal map $A\colon \overline{\C}\to
\overline{\C}$ (in polar coordinates $(r,\theta)\mapsto (1/r,\pi +\theta)$) leaves the surface invariant. This is
the deck transformation, which is orientation reversing.

\item The map $z\mapsto -z$ (in polar coordinates $(r,\theta)\mapsto (r,\pi +\theta)$) induces the rotation by angle $\pi$ about the axis $x_3$ on the surface.

\item The inversion of the $z$-plane with respect to the unit circle, $z\mapsto 1/z$, (in polar coordinates $(r,\theta)\mapsto (1/r,\theta)$) is the composition of 
$A$ with $z\mapsto -z$, and thus, it also induces a rotation of angle $\pi$ about the $x_3$-axis on the surface.

\item The conjugation map $z\mapsto \overline{z}$ (in polar coordinates  $(r,\theta)\mapsto (r,-\theta)$) induces the reflection of $X$ about the plane $(x_1,x_3)$. 

\item The reflection about the imaginary axis (in polar
coordinates $(r,\theta)\mapsto (r,\pi-\theta)$) induces the 
reflection of $X$ about the plane $(x_2,x_3)$.

\item $X$ maps the half-line $\{r e^{-i\pi/4}\ | \ r\in (0,\infty)\}$ (respectively $\{r e^{i\pi/4}\ |\ r\in (0,\infty)\}$) injectively into $l_1=\text{Span}(1,1,0)$ (respectively $l_2=\text{Span}(1,-1,0)$). Thus, the rotations $R_1,R_2$ of angle $\pi$ about $l_1,l_2$ are isometries of $X$ ($R_1$ is induced by $z\mapsto -i\overline{z}$ and $R_2$ by $z\mapsto i\overline{z}$).

\item The map $z\mapsto iz$ (in polar coordinates $(r,\theta)\mapsto (r,\theta+\pi/2)$) induces the rotation of angle $\pi/2$ about the $x_3$-axis composed by a reflection in the $(x_1,x_2)$-plane.
\end{enumerate}

Together with the identity map, the above isometries form a
subgroup of the isometry group Iso$(H_1)$ of $H_1$, isomorphic to the dihedral group $D_4$. 
\begin{lemma}\label{lema5}
These are all the (intrinsic) isometries of $H_1$. 
\end{lemma}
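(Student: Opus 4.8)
The plan is to lift everything to the oriented double cover $\Sigma=\overline{\C}\setminus\{0,\infty\}$ and to use that any intrinsic isometry is conformal or anticonformal for the induced conformal structure. An intrinsic isometry of $H_1$ (viewed as the metric quotient $\Sigma/\langle I\rangle$ with $I=A$) lifts to exactly two isometries of $(\Sigma,ds^2)$ that differ by the deck transformation $A$, and any such lift commutes with $A$; conversely, an isometry of $(\Sigma,ds^2)$ descends precisely when it commutes with $A$. Thus it suffices to compute the full group $G$ of isometries of $(\Sigma,ds^2)$, to check how $A$ sits inside it, and then to read off $\mathrm{Iso}(H_1)=G/\langle A\rangle$.

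First I would recall that every isometry of $(\Sigma,ds^2)$ preserves the conformal class of $ds^2$, hence is a holomorphic or antiholomorphic automorphism of the Riemann surface $\Sigma$; since the two ends (the punctures $z=0,\infty$) must be permuted, each extends to a M\"obius or anti-M\"obius transformation of $\overline{\C}$ preserving $\{0,\infty\}$. Equivalently $\mathrm{Aut}(\Sigma)=\mathrm{Aut}(\C^*)=\{z\mapsto az,\ z\mapsto a/z : a\in\C^*\}$, together with conjugation for the antiholomorphic part. Hence every candidate has one of the four forms $z\mapsto \zeta z$, $z\mapsto \zeta/z$, $z\mapsto \zeta\bar z$, $z\mapsto \zeta/\bar z$ with $\zeta\in\C^*$.

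The decisive step is to impose metric preservation, and this is where I expect the only real computation. Writing $ds^2=\lambda^2\,|dz|^2$ with $\lambda(z)=\tfrac12(1+|z|^2)\,|z^{-4}(z^4-1)| = (1+|z|^2)\,|z^4-1|/(2|z|^4)$, the isometry condition for an (anti)conformal $\psi$ is $\lambda(\psi(z))\,|\psi'(z)|=\lambda(z)$ for all $z$. Substituting each of the four forms and comparing growth as $|z|\to\infty$ forces $|\zeta|=1$; the remaining identity then collapses, after the substitution $w=z^4$, to $|\zeta^4 w-1|=|w-1|$ (respectively $|\zeta^4-w|=|w-1|$) for all $w$, an equality of distances that holds only if $\zeta^4=1$. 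This pins $\zeta$ to a fourth root of unity and leaves exactly $16$ isometries of $(\Sigma,ds^2)$, four of each form, so $|G|=16$.

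Finally I would verify the descent. A direct check shows each of the $16$ maps commutes with $A(z)=-1/\bar z$ (in every case the condition reduces to $|\zeta|=1$), so $\langle A\rangle$ is a normal subgroup of $G$ and $\mathrm{Iso}(H_1)=G/\langle A\rangle$ has order $8$. Since the eight transformations listed in items (1)--(7) together with the identity are pairwise distinct as isometries of $H_1$ and already realize the dihedral group $D_4$, they must exhaust $\mathrm{Iso}(H_1)$, which proves the lemma. A secondary point to keep honest is that the branch points are exactly the zeros of $\lambda$ (the fourth roots of unity, where $\omega$ vanishes), so the degeneracy locus is isolated, the conformal structure extends across it, and the identity $\lambda\circ\psi\,|\psi'|=\lambda$ --- verified off the branch points --- holds everywhere by continuity; hence the branched nature of the metric causes no loss in the argument.
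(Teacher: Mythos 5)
Your proof is correct, and its skeleton is the same as the paper's: both arguments hinge on the fact that an intrinsic isometry of $H_1$ induces a conformal or anticonformal automorphism of $\C\setminus\{0\}$, hence a map of one of the four forms $z\mapsto \zeta z,\ \zeta/z,\ \zeta\bar z,\ \zeta/\bar z$. The difference lies in how $\zeta$ gets pinned down. The paper does this in one stroke: the induced automorphism must preserve the branch locus (the fourth roots of unity, which are the metrically singular points), forcing $\zeta^4=1$ and leaving exactly the listed maps. You instead solve the isometry equation $\lambda(\psi(z))\,|\psi'(z)|=\lambda(z)$ for the explicit conformal factor $\lambda(z)=(1+|z|^2)|z^4-1|/(2|z|^4)$: comparing growth at the puncture gives $|\zeta|=1$, and the equidistance identity $|w-\zeta^{-4}|=|w-1|$ for all $w$ gives $\zeta^4=1$. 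Your computation checks out (the antiholomorphic forms reduce to the holomorphic ones since $\lambda(\bar z)=\lambda(z)$). Your route is longer, but it buys two things the paper's one-line proof leaves implicit: you never need to argue that branch points are intrinsic invariants of the branched metric, and you establish the converse inclusion as well --- all sixteen maps upstairs are isometries of $ds^2$ commuting with the deck transformation $A$ --- so you obtain the exact count $|\mathrm{Iso}(H_1)|=|G/\langle A\rangle|=8$ together with the identification of the group, rather than merely an upper bound.

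One point to repair in your final step: the maps of items (1)--(7), viewed as isometries of the quotient $H_1$, are \emph{not} pairwise distinct. Item 1 is the deck transformation, hence the identity of $H_1$; and items 3 and 5 induce the same isometry of $H_1$, because $z\mapsto 1/z$ and $z\mapsto-\bar z$ differ by composition with $A$. (This redundancy is inherited from the paper's own list, whose items are described on the double cover.) The fix is immediate and does not damage the argument: the listed maps still generate a subgroup of $\mathrm{Iso}(H_1)$ of order exactly $8$ --- for instance, item 7 has order $4$ on the quotient and together with item 4 generates a dihedral group $D_4$ --- and since you proved $|\mathrm{Iso}(H_1)|=8$, this subgroup exhausts the isometry group, which is the lemma.
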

\begin{proof}
This is a direct consequence of the fact that every intrinsic isometry $\phi$ of $H_1$ produces a
conformal diffeomorphism of $\C\setminus \{0\}$ into itself that preserves the set of
branch points of $H_1$. In particular, $\phi$ is of one of the aforementioned eight cases.
\end{proof}

\subsection{Associated family and the conjugate surface $H_1^*$.}
\label{rem6}
The flux vector of $H_1$ around the origin in $\C$
vanishes (in other words, the Weierstrass form $\Phi=(\phi_1,\phi_2,\phi_3)$ associated to $H_1$ is exact).
This implies that all associated surfaces $\{ \wt{H}_1(\varphi)\ | \ \varphi \in [0,2\pi)\} $ to the orientable
cover $\wt{H}_1=\wt{H}_1(0)$ of $H_1$ are well-defined as surfaces in $\R^3$ (the branched minimal immersion 
$\wt{H}_1(\varphi)$ has Weierstrass data $g_{\varphi}=g$, $\omega_{\varphi}=e^{i\varphi}\omega$ and it is 
isometric to $\wt{H}_1$, in particular it has the same branch locus as $\wt{H}_1$). 

None of the surfaces $\wt{H}_1(\varphi)$ except for $\varphi=0$ descends to the non-orientable quotient
$\Pe^2\setminus \{[0]\}$, because the second equation in~\eqref{2.3} is not 
preserved if we exchange $\omega $ by $e^{i\varphi}\omega $, 
$\varphi \in (0,2\pi)$. In particular, none of these associated surfaces are congruent to $H_1$.

The conjugate surface $H_1^*:=\wt{H}_1(\pi/2)$ is symmetric by reflection in the $(x_1,x_2)$-plane.
The intersection between $H_1^*$ and $\{ z=0\}$ consists of the {\it astroid} $\g_4$ parameterized by 
\[
t\mapsto \g_4(t)=\left(\begin{array}{c}
	-\sin(\theta)+\frac{\sin(3\theta)}{3}\\
	-\cos(\theta)-\frac{\cos (3\theta)}{3}  \\
	0
\end{array}\right),
\]
together with four rays starting at the cusps of the astroid in the
direction of their position vectors, see Figure~\ref{astroid}.
\begin{figure}[h]
	\centering
	\includegraphics[width=5cm]{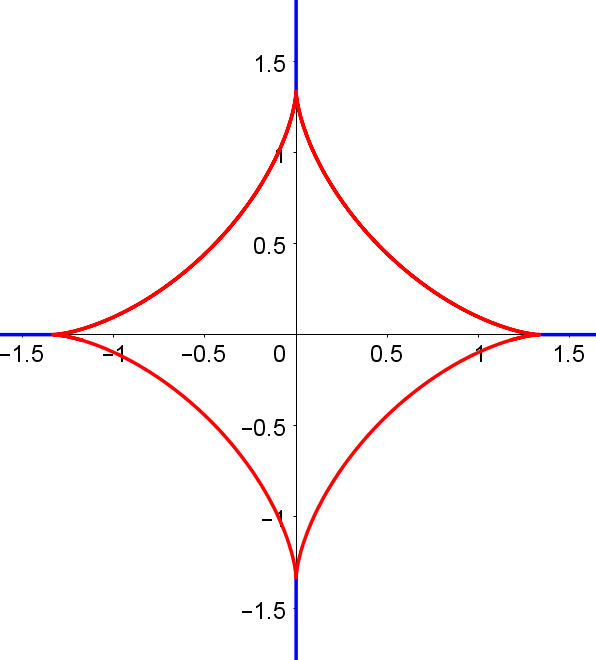}
	\caption{The astroid $\gamma_4$ (red) and the four rays obtained by intersecting $H_1^*$ with the $(x_1,x_2)$-plane (blue).}
\label{astroid}
\end{figure}

In particular, $H_1^*$ is the solution of the Bj\"{o}rling problem for the curve $\g_4$ and the choice of 
unit normal field the normal vector to $\g_4$ as a planar curve, see also Remark~\ref{rem8a} below.

\section{Generalized Henneberg surfaces}
We will next search for a $1$-sided, complete, stable minimal surface in $X\colon \Sigma \la \R^3$ with $\Sigma=\overline{\mathbb{C}}\setminus \mathcal{E}$, $\mathcal{E}$ finite and $g(z)=z$. Hence, $I(z)=-1/\overline{z}$, $\wh{X}=X/\langle I\rangle \colon \S/\langle I\rangle \la \R^3$ 
is stable and \eqref{2.4} writes
\begin{equation}\label{2.5}
f(-1/\overline{z})=-\overline{z^4 f(z)}.
\end{equation}

\subsection{General form for $f$}
We take a general rational function
\begin{equation}\label{2.6}
f(z)=\frac{c}{z^{m+3}}\frac{\prod_{j=1}^M(z-a_j)}{\prod_{j=1}^N(z-b_j)},
\end{equation}
where $c,a_j,b_j\in \C^*$, $m\in \mathbb{N}$, $M,N\in \N \cup \{0\}$ are to be determined. 
\begin{remark}
	\label{rem8}
{\rm 
\begin{enumerate}
\item Hennerberg’s surface $H_1$ has $f(z)=z^{-4}(z^4-1)$, hence $c=1$, $m=1$, $N=0$, $M=4$, $\{a_j\}=\{\pm 1,\pm i\}$. 
\item The zeros of the induced the metric~\eqref{ds2} (branch 
points of the surface) occur precisely at the points $a_j$; the ends occur at $0,\infty $ and at the points $b_j$ (in particular, both families $\{ a_j\}_j,\{ b_j\}_j$ must then
come in pairs of antipodal points, see also~\eqref{10}
below). 
\item A consequence of the last observation is that when the above 
rotations in $\R^3$ of our surfaces (provided that the Weierstrass data close periods) are not allowed unless the 
axis of rotation is vertical. 
\end{enumerate}
}
\end{remark}

Imposing \eqref{2.5} to \eqref{2.6} we get
\[
c(-1)^{m-1+M-N}\overline{z}^{3+m-M+N}\frac{\prod_{j=1}^M(1+a_j\overline{z})}{\prod_{j=1}^N(1+b_j \overline{z})}=f(-1/\overline{z})=-\overline{z^4 f(z)}=-\frac{\overline{c}}{\overline{z}^{m-1}}\frac{\prod_{j=1}^M (\overline{z}-\overline{a_j})}{\prod_{j=1}^N (\overline{z}-\overline{b_j})},
\]
thus
\begin{equation}\label{2.7}
\overline{c}(-1)^{m+M-N}z^{2+2m-M+N}
\prod_{j=1}^M(1+\overline{a_j}z)\prod_{j=1}^N(z-b_j)
=c\prod_{j=1}^M(z-a_j)\prod_{j=1}^N(1-\overline{b_j}z),
\end{equation}
from where we deduce that
\begin{equation}\label{2.8}
2+2m-M+N=0,
\end{equation}
in particular $M-N$ is even. Substituting $z=0$ in \eqref{2.7} we get
\begin{equation}\label{2.9}
\overline{c}(-1)^{m}\prod_{j=1}^N b_j=c\prod_{j=1}^M a_j.
\end{equation}
Using \eqref{2.9}, we can rewrite \eqref{2.7} as an equality between monic polynomials in $z$:
\[
\prod_{j=1}^M\left(\frac{1}{\overline{a_j}}+z\right)\prod_{j=1}^N(z-b_j)=\prod_{j=1}^M(z-a_j)\prod_{j=1}^N\left(\frac{1}{\overline{b_j}}+z\right),
\]
from where we deduce that 
\begin{equation}
\{a_1,\dots, a_M\}=\{-1/\overline{a_1},\dots,-1/\overline{a_M}\}, \quad \{b_1,\dots, b_N\}=\{-1/\overline{b_1},\dots,-1/\overline{b_N}\}.
\label{10}
\end{equation}
that is, $M,N$ are even, the $a_j$ (resp. $b_j$) are given by $M/2$ (resp. $N/2$) pairs of antipodal points in $\C^*$. 
Now \eqref{2.8} and \eqref{2.9} give respectively:
\begin{equation}\label{2.10}
1+m-\widetilde{M}+\widetilde{N}=0,
\end{equation}
\begin{equation}\label{2.11}
-\overline{c}\prod_{j=1}^{N/2} \frac{b_j}{\overline{b_j}}=c\prod_{j=1}^{M/2}\frac{a_j}{ \overline{a_j}}.
\end{equation}

\subsection{Solving the period problem in the one-ended case: complexity}
From \eqref{2.3} and \eqref{2.6} we see that the points where $ds^2$ can blow up are $z=0, b_1, \dots, b_N$ and its antipodal points. In order to keep the computations simple, we will assume there are no $b_j$'s, i.e. $N=0$ (or equivalently $M/2=m+1$), which reduces the period problem to imposing 
\[
\overline{\int_\gamma g^2 \omega}=\int_\gamma \omega,
\qquad \mbox{Re}\int_\gamma f\omega=0,
\]
where $\gamma=\{|z|=1\}$, or equivalently,
\begin{equation}\label{2.12}
\overline{\text{Res}_0(g^2 f)}=-\text{Res}_0(f),\quad \text{Im}\,\text{Res}_0(gf)=0.
\end{equation}
We can simplify \eqref{2.6} to
\begin{equation}\label{2.13}
f(z)=\frac{c}{z^{m+3}}\prod_{j=1}^{m+1}(z-a_j)\left(z+\frac{1}{\overline{a_j}}\right),
\end{equation}
which satisfies~\eqref{2.5} (this is the condition to
descend to the quotient as a 1-sided surface, provided that
the period problem~\eqref{2.12} is solved) if and only
if~\eqref{2.11} holds, which in this case reduces to
\begin{equation}\label{16}
	-\frac{\overline{c}}{c}=\prod_{j=1}^{m+1}\frac{a_j}{ \overline{a_j}}.
\end{equation}

We call 
\begin{equation}\label{2.14}
P(z):=\prod_{j=1}^{m+1}(z-a_j)\left(z+\frac{1}{\overline{a_j}}\right)=\sum_{h=0}^{2m+2} A_h z^h.
\end{equation}
Thus,
\[
\text{Res}_0(f)=c\, \text{Res}_0(\sum_{h=0}^{2m+2}A_{h} z^{h-m-3})=c A_{m+2},
\]
\[
\text{Res}_0(g^2f)=c\, \text{Res}_0(\sum_{h=0}^{2m+2}A_{h} z^{h-m-1})=c A_{m},
\]
\[
\text{Res}_0(gf)=c\, \text{Res}_0(\sum_{h=0}^{2m+2}A_{h} z^{h-m-2})=c A_{m+1}.
\]
Thus, \eqref{2.12} reduces to 
\begin{equation}\label{2.15}
\overline{c A_{m}}=-c A_{m+2},\quad \text{Im}(c A_{m+1})=0.
\end{equation}
\begin{remark}
{\rm
We can assume $|c|=1$ due
to the fact that multiplying the Weierstrass form by a positive number does not affect to solving the period 
problem and just multiplies the resulting surface by a homothety. Similarly, exchanging $c$ by $-c$ 
does not affect to solving the period problem.
}
\end{remark}
We also write $a_j=|a_j|e^{i\theta_j}$, $\theta_j\in \R$. Thus,
\[
-a_j+\frac{1}{\overline{a_j}}=(-|a_j|+\frac{1}{|a_j|})e^{i\theta_j}, \quad\frac{a_j}{\overline{a_j}}=e^{2i\theta_j},
\] 
and so, 
\begin{equation}\label{2.16}
P(z)=\prod_{j=1}^{m+1}\left(z^2+(-|a_j|+\frac{1}{|a_j|})e^{i\theta_j}z-e^{2i\theta_j}\right)
\end{equation}
\begin{definition}
	\label{def7}
Given $m\in \N$, a list $(c,a_1,\ldots ,a_{m+1})\in \esf^1\times (\C^*)^{m+1}$ 
solving the equations \eqref{16},\eqref{2.15} will be called a {\it solution of the period problem with 
complexity $m$.} Note that geometrically,
$a_1,\ldots,a_{m+1}$ are the Gaussian images of the branch
points of the resulting surface.
\end{definition}

\subsection{The case when the $a_j$ are the $(2m+2)$-roots of unity}
\label{raices}
For each complexity $m$, there is a most symmetric configuration that gives rise to a solution of the period problem
for that complexity, which we describe next.

Take the $a_j$ as the solutions of the equation $a^{2m+2}=1$ (i.e. $|a_j|=1$ and $\theta_j=\frac{\pi}{m+1}(j-1)$, $j=1,\dots,m+1$).
Observe that
\[
\prod_{j=1}^{m+1}\frac{a_j}{ \overline{a_j}}=\prod_{j=1}^{m+1}e^{2i\theta_j}=e^{2i\sum_{j=1}^{m+1}\t_j}=e^{\frac{2\pi i}{m+1}\sum_{j=1}^{m+1}(j-1)}=
e^{\frac{2\pi i}{m+1}\frac{m(m+1)}{2}}=e^{i\pi m},
\]
hence the validity of~\eqref{16} is equivalent in this case
to 
\begin{equation}
c=\pm i^{m-1}.
\end{equation}
As for equation~\eqref{2.15}, note that \eqref{2.16} can be written as
\[
P(z)=\prod_{j=1}^{m+1}(z^2-e^{2i\theta_j})=z^{2m+2}-1,
\]
and thus $A_{m}=0$ (because $m>0$), $A_{m+2}=0$ (because $m+2<2m+2$) and $A_{m+1}=0$. In particular, \eqref{2.15} is trivially satisfied for each value of $c\in \C^*$. Therefore, the Weierstrass data
\begin{equation}
g(z)=z, \quad \omega=i^{m-1}z^{-m-3}(z^{2m+2}-1)dz,\qquad z\in \C^*,
\label{WHm}
\end{equation}
give rise to a $1$-sided, complete, stable minimal surface 
$H_{m}$. For $m=1$ we recover the classical Henneberg’s surface.

\subsection{Associated family and the conjugate surface $H_m^*$.}
\label{sec5.4}
Since $A_m=A_{m+1}=A_{m+2}=0$, the flux vector of
$H_m$ around the origin in $\C$ vanishes and the Weierstrass form $\Phi=(\phi_1,\phi_2,\phi_3)$ 
associated to $H_m$ is exact. Thus all associated surfaces  $\{ \wt{H}_m(\varphi)\ | \ \varphi \in [0,2\pi)\} $
to the orientable cover $\wt{H}_m=\wt{H}_m(0)$ of
are well-defined. As in the case $m=1$ (see Section~\ref{rem6}), none of these associated
surfaces descends to the $1$-sided quotient, except for $\pm H_m$. Let $H_m^*:=\wt{H}_m(\pi/2)$ be 
the conjugate surface to $H_m$. 

The behavior of $H_m$ is very different depending on the parity of $m$.
A naive justification of this dependence on the parity of $m$ comes from
the fact that the coefficient for $\omega $ changes from $\pm 1$ for $m$ odd to $\pm i$ for $m$ even.
A more geometric interpretation of this dependence will be given next.

\subsection{The case $m$ odd}
If $m\in \N$ is odd,
\eqref{WHm} gives $\omega=z^{-m-3}(z^{2m+2}-1)dz$. 
Although $H_m$ has $m+1$ branch points in $\S=\Pe^2\setminus \{[0]\}$ 
(the classes of the $(2m+2)$-roots of unity under the antipodal map), 
they are mapped into just two different points in $\R^3$: 
after translating the surface in $\R^3$ so that 
$X(e^{i\frac{\pi}{2(m+1)}})=\vec{0}$ (we are using the notation 
in~\eqref{1}), the branch points of $H_m$ are mapped to $(0,0,\pm 1)$ and a 
parameterization of $H_m$ in polar coordinates is (compare with~\eqref{H1X})
\begin{equation}
\label{Hmimpar}
X(r e^{i\theta})=
\left(\begin{array}{c}
x_1\\ x_2\\ x_3
\end{array}\right)
=\left(\begin{array}{c}
\frac{\cos (m\t)}{2 m}(r^m-\frac{1}{r^m})-\frac{ \cos ((m+2)\theta)}
{2 (m+2)}(r^{m+2}-\frac{1}{r^{m+2}})
\\
-\frac{\sin (m\t)}{2 m}(r^m-\frac{1}{r^m})-\frac{ \sin ((m+2)\theta)}
{2 (m+2)}(r^{m+2}-\frac{1}{r^{m+2}})
\\
\frac{\cos((m+1)\theta)}{m+1}(r^{m+1}+\frac{1}{r^{m+1}})
\end{array}\right).
\end{equation}
$X$ maps the unit circle into the vertical segment $\{(0,0,t)|\,t\in 
[-1,1]\}$. $\theta\in [0,2\pi]\mapsto
X(e^{i\theta})$ bounces between the two branch points of $H_m$, and the 
complement of this closed segment in the $x_3$-axis is not contained in 
$H_m$. $H_m\cap \{ x_3=0\}$ consists of an equiangular system of 
$m+1$ straight lines passing through the origin (the images by $X$ of the 
straight lines of arguments	$\t=\frac{\pi/2+k\pi}{m+1}$, $k=0,\ldots,m$ in 
polar coordinates), see Figure~\ref{fig:H2H3} right for $H_3$.

\subsection{The case $m$ even}
If $m$ is even (and non-zero), 
\eqref{WHm} produces $\omega=i\, z^{-m-3}(z^{2m+2}-1)dz$. In this case, a 
parametrization of	$H_m$ in polar coordinates is 
\begin{equation}
\label{Hmpar}
X(r e^{i\theta})=
\left(\begin{array}{c}
x_1\\ x_2\\ x_3
\end{array}\right)
=\left(\begin{array}{c}
-\frac{ \sin (m\theta)}{2 m}\left(r^m+\frac{1}{r^m}\right)
+\frac{\sin ((m+2)\theta)}{2 (m+2)}
\left(r^{m+2}+\frac{1}{r^{m+2}}\right)
\\
-\frac{ \cos (m\t)}{2 m}\left(r^m+\frac{1}{r^m}\right)
-\frac{ \cos ((m+2)\theta  )}{2 (m+2)}\left(r^{m+2}+\frac{1}{r^{m+2}}\right)
\\
\frac{\sin((m+1)\theta)}{m+1}(\frac{1}{r^{m+1}}-r^{m+1})
\end{array}\right).
\end{equation}
$X$ maps the unit circle $\{r=1\}$ into a certain hypocycloid contained
in the plane $\{ x_3=0\}$, as we will explain next.

A {\it hypocycloid} of inner radius $r>0$ and outer radius $R>r$ 
is the planar curve traced by a point on a circumference of 
radius $r$ which is rolling along the interior of another circumference
(which is fixed) of radius $R$. It can be parametrized by $\alpha(t)=(x(t),y(t))$,
$t\in \R$, where
\[
x(t)=-(R-r)\sin t +r\sin\left(\frac{R-r}{r}t\right),\qquad 
y(t)=-(R-r)\cos t -r \cos\left(\frac{R-r}{r}t\right).
\]

Using \eqref{Hmpar}, we deduce that the image by $X$ of the unit circle $\esf^1\subset \C$ has the following parametrization:
\begin{equation}
\label{hypoc}
\theta\in [0,2\pi)\mapsto X(e^{i\theta})=\left(\begin{array}{c}
	-\frac{\sin(m\theta)}{m}+\frac{\sin((m+2)\theta)}{m+2}\\
	-\frac{\cos(m \theta)}{m}-\frac{\cos(m+2)\theta}{m+2}  \\
	0
\end{array}\right).
\end{equation}
From~\eqref{hypoc} we deduce that, up to the reparametrization $t=m\theta$, 
$X(\esf^1)$ is the hypocycloid of inner radius $r=\frac{1}{m+2}$ and outer
radius $R=\frac{2m+2}{m(m+2)}$, which has exactly $m+1$ cusps. 
These cusp points are the images by $X$ of the 
$m+1$ branch points of $H_m$. In particular, $H_m$ is the unique minimal 
surface obtained as solution of the Bj\"{o}rling problem for the hypocycloid of $m+1$ 
cusps (this number of cusps is any odd positive integer, at least three), 
inner radius $r=\frac{1}{m+2}$ and outer radius 
$R=\frac{2m+2}{m(m+2)}$, when we take as normal vector field $\eta$
(see Section~\ref{Bjorling} for the notation) the normal vector to the
hypocycloid as a planar curve. 

We depict this planar curve in the
simplest cases $m=2,4,6$ in Figure \ref{planar} in red.
\begin{figure}[h]
	\centering
	\includegraphics[width=4.9cm]{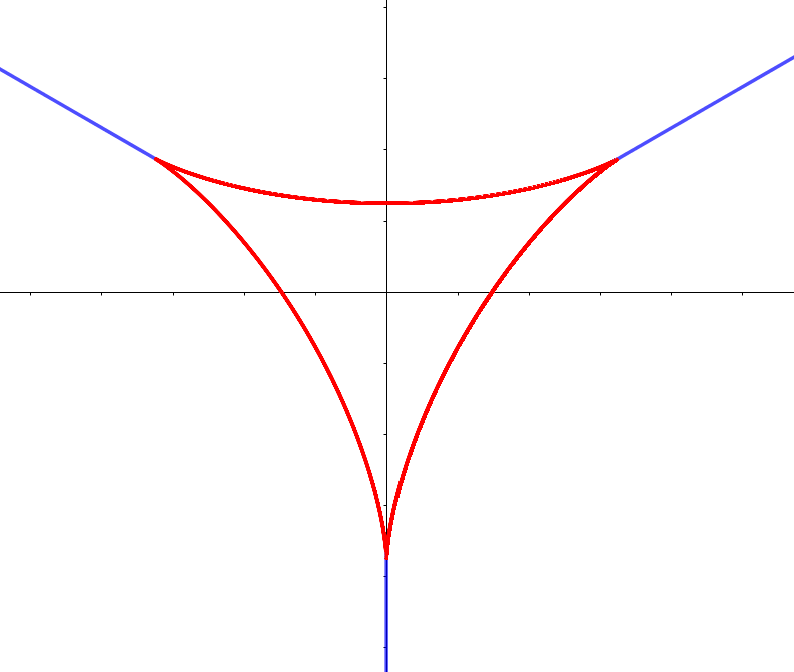}
	\includegraphics[width=4.9cm]{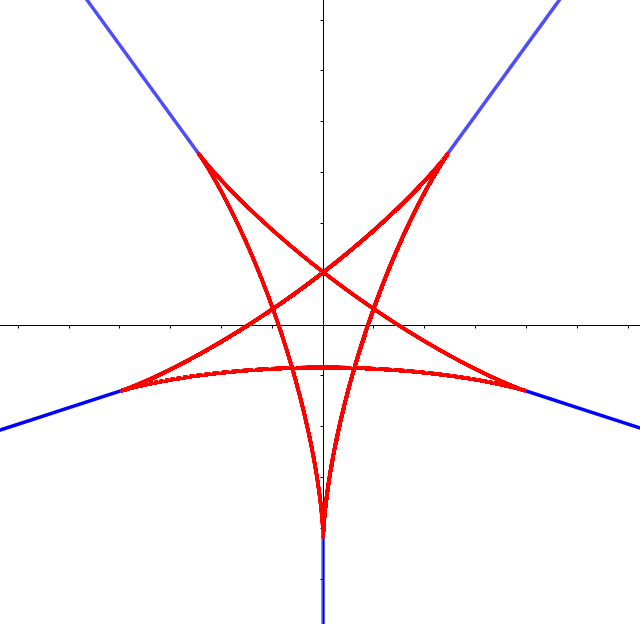}
	\includegraphics[width=4.9cm]{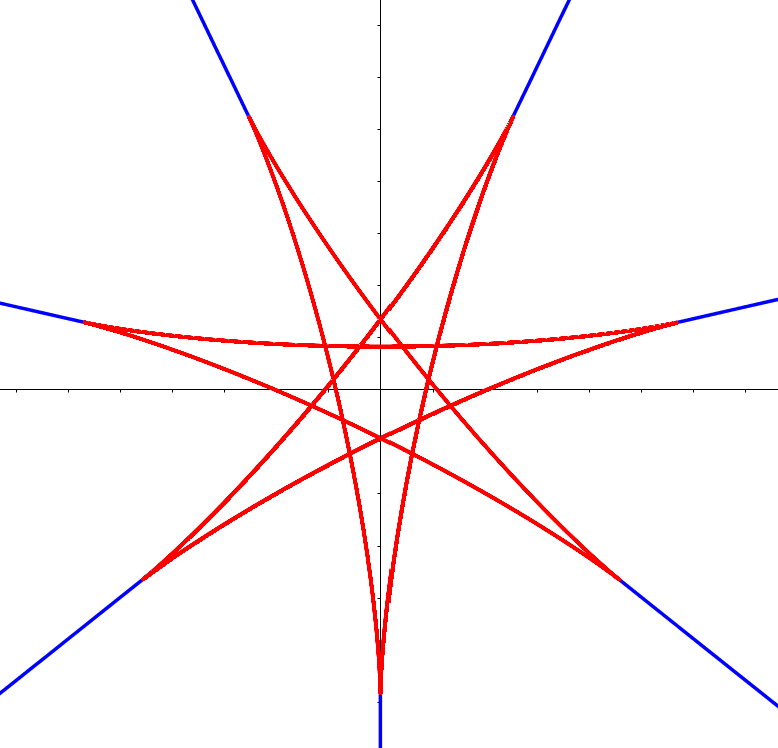}
	\caption{The intersection of $H_m$ (with $m>0$ even) with $\{x_3=0\}$ consists of a hypocycloid with $m+1$ cusps (in red) together with  half-lines $\{ tp\ | \ t\geq 1\}$ that start from each of these cusp points $p$. Left: $H_2\cap \{ x_3=0\}$, where
	the branch points have coordinates $(0,-\frac{3}{4},0),
	(-\frac{3 \sqrt{3}}{8},	\frac{3}{8},0),(\frac{3 \sqrt{3}}{8},
	\frac{3}{8},0)$. Center: $H_4\cap \{ x_3=0\}$, Right: $H_6\cap \{ x_3=0\}$.
	\label{planar}
	}
	\label{fig:planar}
\end{figure}
\begin{figure}[h]
	\centering
	\includegraphics[width=8cm]{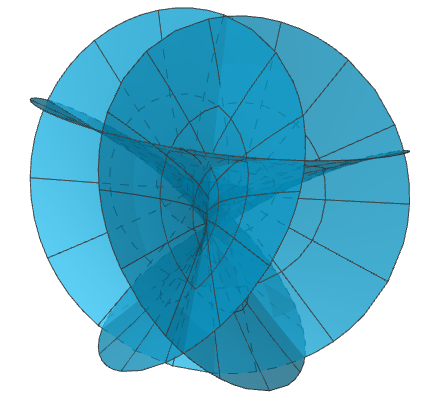}
	\includegraphics[width=8cm]{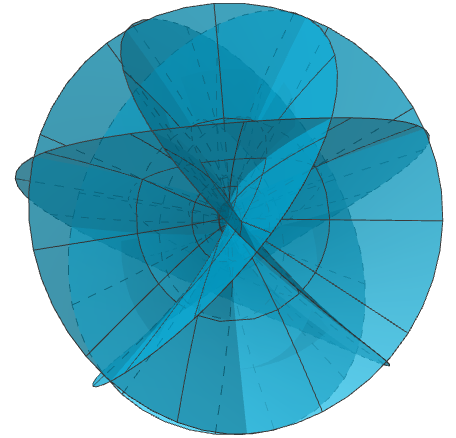}
	\caption{Left: $H_2$. Right: $H_3$.}
\label{fig:H2H3}
\end{figure}

\subsection{Revisiting the case $m$ odd: $H_m^*$ as a solution of 
a Bj\"{o}rling problem for a hypocycloid}
\label{sec5.7}

Using the Weierstrass formula~\eqref{1}, it can be easily seen that
the conjugate surface $H_m^*$ of $H_m$ with odd $m$ can be parameterized
in polar coordinates $z=re^{i\t}$ by $X^*(re^{i\t})$ given by the same formula as the right-hand-side of~\eqref{Hmpar}. $X^*(\esf^1)$ parameterizes a
hypocycloid $\g_{2m+2}$ with inner radius 
$r=\frac{1}{m+2}$ and outer radius $R=\frac{2m+2}{m(m+2)}$. Since 
\[
\frac{R}{r}=\frac{2m+2}{m},
\] 
we deduce that $\g_{2m+2}$ has $2m+2$ cusps\footnote{For a hypocycloid of 
inner radius $r>0$ and outer radius $R>r$, the quotient 
$R/r$ expresses the number of times that the inner
circumference rolls along the outer circumference until it completes a 
loop. If $R/r$ is a rational number and $a/b$ is the irreducible fraction 
of $R/r$, then $b\cdot a/b=a$ counts the number of times that the inner 
circumference rolls until the point that generates the hypocycloid reaches 
its initial position. This number $a$ coincides with the number of 
cusps.}.
Observe that $2m+2$ is a positive multiple of $4$ because $m$ is odd;
and conversely, every positive multiple of $4$ can be written as 
$2m+2$ for a unique $m\in \N$ odd. This tells us that 
for any $m\in \N$ odd, $H^*_m$ is the unique solution to the Björling
problem for the hypocycloid $\g_{2m+2}$,
when we take as normal vector field $\eta$ the normal vector to $\g_{2m+2}$
as a planar curve.

\begin{remark}
\label{rem8a}
{\rm 
\begin{enumerate}
\item In the particular case of a hypocycloid of 4 cusps (called {\it astroid}),
we recover the conjugate surface $H_1^*$ of the classical Henneberg surface.
This result was described by Odehnal~\cite{ode1}, who also studied the
 Björling problem for an hypocycloid $\g_3$ of three cusps 
from the viewpoint of algebraic surfaces. 
\item We have described the minimal surfaces obtained as the
solution of a Björling problem over a hypocycloid if the number of its
cusps is either any given odd number or a multiple of four. The case
that remains is when the hypocycloid has $4k+2$ cusps, $k\in \N$. The
corresponding solution to this Björling problem 
can be also explicitly described by the 
parametrization~\eqref{Hmpar}, now with a parameter $m\in \Q$. 
Namely, if we choose $m$ to be 
of the form $m=\frac{1}{2k}$, $k\in \N$, inner radius $r=\frac{1}{m+2}$
and outer radius $R=\frac{2m+2}{m(m+2)}$, then 
\[
\frac{R}{r}=\frac{2m+2}{m}=4k+2,
\]
which ensures that the complete branched minimal surface $H_{\frac{1}{2k}}=X(\C\setminus \{ 0,\infty\})$ (here $X$ is given 
by~\eqref{Hmpar}) is symmetric by reflection in the $(x_1,x_2)$-plane), 
and $X(\esf^1)$ is a hypocycloid with $4k+2$ cusps. $H_{\frac{1}{2k}}$
does not descend to a 1-sided quotient. 
\end{enumerate}
}
\end{remark}

\subsection{Isometries of $H_m$}
\label{sec5.8}
As expected, the  isometry group of $H_m$
depends on whether $m$ is even or odd. 
\par
\vspace{.2cm}
Suppose firstly that \underline{$m$ is odd.} In this case, \eqref{Hmimpar}
gives:
\begin{enumerate}[(O1)]
\item The reflection of the $z$-plane about the imaginary axis, $re^{i\theta} \mapsto re^{i(\pi-\theta)}$, produces
via $X$ the reflectional symmetry about the $(x_2, x_3)$-plane in $H_m$.
    
\item The rotation $re^{i\theta} \mapsto re^{i(\t +\pi+\frac{\pi}{m+1})}$
of angle $\pi+\frac{\pi}{m+1}$ about the origin in the 
$z$-plane, gives that $H_m$ is symmetric under the rotation of angle
$\frac{\pi}{m+1}$ about the $x_3$-axis composed by a reflection in the 
$(x_1,x_2)$-plane.
\end{enumerate} 
(O1), (O2) generate a subgroup of the extrinsic isometry group $\text{Iso}(H_m)$ of $H_m$, isomorphic to the dihedral group $D_{2m+2}$.
\par
\vspace{.2cm}
Now assume that \underline{$m$ is even.} Using~\eqref{Hmpar}, we obtain:
\begin{enumerate}[(E1)]
\item The reflection $re^{i\theta} \mapsto re^{i(\pi-\theta)}$
of the $z$-plane about the imaginary axis produces
via $X$ the reflectional symmetry about the $(x_2, x_3)$-plane in $H_m$ (this is a common feature of both the odd and even cases).
    
\item The rotation $re^{i\theta} \mapsto re^{i(\t +\frac{2\pi}{m+1})}$
of angle $\frac{2\pi}{m+1}$ about the origin in the 
$z$-plane, gives that $H_m$ is symmetric under the rotation of angle
$\frac{2\pi}{m+1}$.

\item The antipodal map $re^{i\theta} \mapsto re^{i(\t +\pi)}$ in the 
$z$-plane, produces a reflectional symmetry of $H_m$ with respect to the
$(x_1,x_2)$-plane.
\end{enumerate} 
(E1), (E2), (E3) generate a subgroup of $\text{Iso}(H_m)$ isomorphic to the
group $D_{m+1}\times \mathbb{Z}_2$.

Repeating the argument in the proof of Lemma~\ref{lema5}, we now deduce the following.
\begin{lemma}
Regardless of the parity of $m$, these are all the (intrinsic) isometries of $H_m$.
\end{lemma}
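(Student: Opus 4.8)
The plan is to mirror the proof of Lemma~\ref{lema5} verbatim, since the key structural fact is identical in both cases: any intrinsic isometry of the induced (branched) metric $ds^2$ must respect the conformal structure and the geometry that distinguishes the branch points. First I would note that an intrinsic isometry $\phi$ of $H_m$ preserves the induced metric \eqref{ds2}, hence it preserves the conformal class; since $H_m$ is conformally $\C^*=\ov{\C}\setminus\{0,\infty\}$, the map $\phi$ lifts to (or descends from) a conformal diffeomorphism $\Phi$ of $\C^*$ into itself. The conformal automorphisms of $\ov{\C}$ fixing the pair $\{0,\infty\}$ setwise are exactly $z\mapsto \lambda z$ and $z\mapsto \lambda/z$ for $\lambda\in\C^*$ (together with their anti-holomorphic counterparts $z\mapsto\lambda\ov z$, $z\mapsto\lambda/\ov z$, once we allow orientation-reversing isometries); this is the analogue of the ``eight cases'' in Lemma~\ref{lema5}.

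Next I would use that $\phi$, as an isometry, must preserve the set of branch points and, more strongly, must preserve the induced metric density $(1+|z|^2)^2|z|^{-2m-6}|z^{2m+2}-1|^2$ (read off from \eqref{ds2} and \eqref{WHm}) up to the conformal factor coming from $\Phi$. The branch points are the Gaussian images $a_1,\dots,a_{m+1}$, i.e. the $(2m+2)$-th roots of unity lying in the quotient $\Pe^2=\ov{\C}/\langle A\rangle$, so $\Phi$ must permute the full set of $(2m+2)$-th roots of unity while commuting with the antipodal map $A(z)=-1/\ov z$. Imposing that $z\mapsto\lambda z^{\pm1}$ (or its conjugate) carry the roots of $z^{2m+2}=1$ to themselves forces $\lambda$ to be a $(2m+2)$-th root of unity (in the holomorphic case) or similarly constrained in the anti-holomorphic case; this pins down the finite list of admissible $\Phi$. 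Comparing cardinalities, the admissible $\Phi$ form a group of order $4m+4$, matching exactly the generators (O1),(O2) (resp. (E1),(E2),(E3)) already exhibited, so no further isometries exist.

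The one genuinely new point compared to Lemma~\ref{lema5} is bookkeeping the parity: for $m$ odd the computation produces the group $D_{2m+2}$ of order $4m+4$, while for $m$ even the same count reorganizes as $D_{m+1}\times\Z_2$, again of order $4m+4$. I would therefore carry out the root-preservation argument once in a parity-free manner, conclude that $|\mathrm{Iso}(H_m)|\le 4m+4$, and then observe that the subgroups already constructed in Section~\ref{sec5.8} have precisely this order, so they exhaust $\mathrm{Iso}(H_m)$.

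The main obstacle I anticipate is the verification that every conformal diffeomorphism $\Phi$ permuting the branch locus actually \emph{does} descend to an isometry of the (branched) metric, rather than merely a conformal self-map of $\Sigma$; equivalently, one must rule out conformal automorphisms that permute the $a_j$ but fail to preserve $ds^2$. Concretely, I would need to check that scaling $z\mapsto\lambda z$ with $\lambda$ a root of unity leaves the density $(1+|z|^2)^2|z^{2m+2}-1|^2|z|^{-2m-6}$ invariant, which is immediate since $|\lambda|=1$ makes $|1+|z|^2|$ and $|z|$ invariant and $|\lambda^{2m+2}z^{2m+2}-1|=|z^{2m+2}-1|$ when $\lambda^{2m+2}=1$; the inversion cases $z\mapsto 1/z$ require the extra factor symmetry $|z|^{-(2m+6)}|z^{2m+2}-1|^2\mapsto |z|^{2m+6}|z^{-(2m+2)}-1|^2=|z|^{-(2m+6)}|1-z^{2m+2}|^2$, which again matches. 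Since these are exactly the symmetries already listed, the argument closes, and I expect the write-up to be short, deferring to Lemma~\ref{lema5} for the structural step and only recording the parity count.
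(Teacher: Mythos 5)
Your proposal is structurally the same as the paper's proof, which is a one-liner: the lemma follows by ``repeating the argument in the proof of Lemma~\ref{lema5}'', i.e.\ every intrinsic isometry of $H_m$ induces a conformal (or anticonformal) diffeomorphism of $\C^*$ preserving the branch locus, and these form the finite list already exhibited. Your elaboration --- classifying the automorphisms of $\ov{\C}$ fixing $\{0,\infty\}$ setwise, imposing that they permute the $(2m+2)$-th roots of unity, and verifying invariance of the metric density --- is a correct filling-in of that argument.

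There is, however, one bookkeeping error you should repair before writing this up. $H_m$ is the $1$-sided quotient of $\C^*$ by the deck transformation $A(z)=-1/\ov{z}$, so its intrinsic isometries correspond \emph{two-to-one} to the admissible maps $\Phi$ of $\C^*$: each isometry of $H_m$ has exactly two lifts, differing by $A$. Your enumeration of admissible $\Phi$ on $\C^*$ --- $z\mapsto\lambda z^{\pm1}$ and $z\mapsto\lambda\ov{z}^{\pm1}$ with $\lambda^{2m+2}=1$ --- yields $4(2m+2)=8m+8$ maps, not $4m+4$; the order $4m+4$ only appears after quotienting by $\langle A\rangle$. As literally stated, your cardinality comparison does not close: the subgroup generated by (O1),(O2) (resp.\ (E1)--(E3)) has order $4m+4$ when viewed as maps of $\C^*$, which is strictly smaller than $8m+8$ --- it misses $A$ itself and all compositions with it, e.g.\ the maps of the form $\lambda/z$. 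The counts agree precisely on the quotient, where $A$ acts as the identity. Once you insert this identification of lifts (exactly as in the $m=1$ case, where the $16$ admissible maps of $\C^*$ descend to the $8$ elements of $D_4$), the argument is complete and coincides with the paper's.
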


\section{Moduli spaces of examples with a given complexity}
Our next goal is to analyze the structure of the family of
solutions of the period problem with a given complexity in 
the sense of Definition~\ref{def7}. For $m=1$, we will obtain uniqueness
of the Henneberg surface $H_1$. This uniqueness is a special feature of
the case $m=1$, since continuous families of examples for complexities
$m\geq 2$ can be produced.

We define the function $R\colon (0,\infty)\to (0,\infty)$, $R(r)=r-\frac{1}{r}$.
\subsection{Solutions with complexity $m=1$}

Since $m=1$, solving the period problem \eqref{2.15}
descending to the 1-sided quotient reduces to solving 
\begin{equation}\label{k0}
\overline{cA_{1}}=-cA_{3},\quad \text{Im}(cA_{2})=0,\qquad
-\frac{\overline{c}}{c}=\frac{a_1}{ \overline{a_1}}\frac{a_2}{ \overline{a_2}}.
\end{equation}
Suppose that a list $(c,a_1,a_2)\in \esf^1\times (\C^*)^2$
is a solution of the 1-sided period problem, with associated branched minimal
immersion $X$. Recall that $g(z)=z$ is its Gauss map. 
The list that gives rise to $H_1$ (Henneberg) is $(\pm 1,1,i)$.
\begin{remark}
{\rm 
Since rotations of our surfaces are not allowed unless the
rotation axis is vertical (see Remark~\ref{rem8}), we
we can assume $a_1\in \R^+$ from now on, although we cannot assume $a_1=1$.
}
\end{remark} 
 Write $a_1,a_2$ in polar coordinates as $a_1=r_1$, $a_2
=r_2 e^{i\theta_2}$, $r_1,r_2>0$, $\t_2\in [0,2\pi)$. 
\eqref{2.14} can be written as 
\begin{eqnarray}
	P(z)&=&z^4-\left[ R(r_1)+R(r_2)e^{i\t_2}\right]z^3
	-\left[ 1+e^{2i\theta_2}-R(r_1)R(r_2)e^{i\t_2}\right] z^2\nonumber
	\\
	& & 
	+\left[ R(r_1)e^{2i\t_2}+R(r_2)e^{i\t_2}
	\right] z
	+e^{2i\t_2},\nonumber
\end{eqnarray}
hence
\begin{eqnarray}
	A_1&=&R(r_1)e^{2i\t_2}+R(r_2)e^{i\t_2},
	\label{A1}
	\\
	A_2&=&-\left[ 1+e^{2i\theta_2}-R(r_1)R(r_2)e^{i\t_2}\right],\label{A2}
	\\
	A_3&=&-\left[ R(r_1)+R(r_2)e^{i\t_2}\right].
	\label{A3}
\end{eqnarray}
Writing $c=e^{i\be}$, we have 
\begin{eqnarray}
\overline{cA_{1}}+cA_{3}
&=&R(r_1)\left[ e^{-i(\be +2\t_2)}-e^{i\be }\right] +R(r_2)\left[ e^{-i(\be +\t_2)}
-e^{i(\be+\t_2)}\right] \nonumber
\\
&=&R(r_1)e^{-i\t_2}\left[ e^{-i(\be +\t_2)}
-e^{i(\be+\t_2)}\right] 
-2R(r_2)\sinh (i(\be +\t_2))\nonumber
\\
&=&-2e^{-i\t_2}R(r_1)\sinh (i(\be +\t_2)) -2iR(r_2)\sin (\be +\t_2)\nonumber
\\
&=& -2i\left[ R(r_1) e^{-i\t_2}+R(r_2)\right]\sin (\be+\theta_2),\label{18}
	\\ 
cA_{2}&=&-e^{i\be}\left( 1+e^{2i\theta_2}\right)
+R(r_1)R(r_2)e^{i(\be+\t_2)}\nonumber
\\
&=&-e^{i(\be+\t_2)}\left( e^{-i\t_2}+e^{i\t_2}\right)
+R(r_1)R(r_2)e^{i(\be+\t_2)}\nonumber
\\
&=& -\left[ 2\cosh (i\t_2)-R(r_1)
R(r_2)\right] e^{i(\be +\t_2)}\nonumber
\\
&=&-\left[ 2\cos \t_2
	-R(r_1)R(r_2)\right] e^{i(\be +\t_2)}.
	\label{19}
\end{eqnarray}
A list $(c,a_1,a_2)$ solves the period problem if and only if the right-hand-side of~\eqref{18} vanishes and the right-hand-side of~\eqref{19} is real.

The
third equation in~\eqref{k0} reduces to 
\begin{equation}
e^{2i(\be+\t_2)}=-1.
\label{25}
\end{equation}

\begin{theorem}
\label{thm15}
The Henneberg surface $H_1$ is the only surface with $m=1$ that solves the period problem and descends to a 1-sided quotient.
\end{theorem}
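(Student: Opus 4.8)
The plan is to squeeze all the rigidity out of the three scalar equations in~\eqref{k0}, exploiting the explicit computations~\eqref{18}--\eqref{25} that precede the statement. First I would treat the third equation, which by~\eqref{25} is equivalent to $e^{2i(\be+\t_2)}=-1$, i.e. $e^{i(\be+\t_2)}=\pm i$. In particular $\cos(\be+\t_2)=0$ and $\sin(\be+\t_2)=\pm1\neq0$. This normalization is the engine of the whole argument: it both kills the factor $\sin(\be+\t_2)$ appearing in~\eqref{18} and forces $e^{i(\be+\t_2)}$ to be purely imaginary in~\eqref{19}.

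Next I would feed $\sin(\be+\t_2)\neq0$ into the first period equation. By~\eqref{18}, $\overline{cA_1}+cA_3$ vanishes if and only if $R(r_1)e^{-i\t_2}+R(r_2)=0$; since $R(r_1),R(r_2)\in\R$, splitting this complex identity into real and imaginary parts yields the two real equations $R(r_1)\sin\t_2=0$ and $R(r_1)\cos\t_2+R(r_2)=0$. For the second period equation I would observe from~\eqref{19} that $cA_2$ is a real multiple of $e^{i(\be+\t_2)}=\pm i$, hence purely imaginary; therefore $\mathrm{Im}(cA_2)=0$ is not merely a real constraint but forces $cA_2=0$, that is, $2\cos\t_2-R(r_1)R(r_2)=0$. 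The upgrade from ``$cA_2$ real'' to ``$cA_2=0$'' is exactly what makes the system overdetermined.

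The problem is now reduced to the three real equations $R(r_1)\sin\t_2=0$, $R(r_1)\cos\t_2+R(r_2)=0$ and $R(r_1)R(r_2)=2\cos\t_2$, which I would analyze according to the first. If $R(r_1)\neq0$, then $\sin\t_2=0$, so $\cos\t_2=\varepsilon\in\{\pm1\}$; the remaining equations become $R(r_2)=-\varepsilon R(r_1)$ and $R(r_1)R(r_2)=2\varepsilon$, and substituting the former into the latter gives $-\varepsilon R(r_1)^2=2\varepsilon$, i.e. $R(r_1)^2=-2$, which is impossible. Hence $R(r_1)=0$, so $r_1=1$; the second equation then forces $R(r_2)=0$, so $r_2=1$, and the third forces $\cos\t_2=0$, so $\t_2\in\{\pi/2,3\pi/2\}$. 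In either case the four zeros $a_1,-1/\overline{a_1},a_2,-1/\overline{a_2}$ of $f$ are exactly $\{\pm1,\pm i\}$, which is the Weierstrass data~\eqref{H1} of $H_1$, while~\eqref{25} pins down $\be$ and leaves only the irrelevant sign of $c$. This identifies the surface with $H_1$.

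I expect the only genuine subtlety to be the two observations that the normalization $e^{i(\be+\t_2)}=\pm i$ supplies: that $cA_2$ is purely imaginary (so $\mathrm{Im}(cA_2)=0$ collapses to $cA_2=0$), and the short sign chase in the case $R(r_1)\neq0$ producing the contradiction $R(r_1)^2=-2$ from the positivity of $R(r_1)^2$. Everything else is bookkeeping, and at the end I would note that the two admissible values of $\t_2$, together with the freedom to relabel $a_j\mapsto-1/\overline{a_j}$ and to swap $a_1\leftrightarrow a_2$, account for the apparent multiplicity of solutions, all of which describe the single surface $H_1$.
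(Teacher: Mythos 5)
Your proof is correct and follows essentially the same route as the paper: \eqref{25} forces $\sin(\be+\t_2)=\pm 1$, which reduces the vanishing of \eqref{18} to $R(r_1)e^{-i\t_2}+R(r_2)=0$ and upgrades $\mathrm{Im}(cA_2)=0$ in \eqref{19} to $2\cos\t_2-R(r_1)R(r_2)=0$, followed by the same dichotomy $r_1=1$ versus $r_1\neq 1$ (equivalently $R(r_1)=0$ or not). Your treatment of the second case is marginally more streamlined (a single contradiction $R(r_1)^2=-2$ rather than the paper's two subcases obtained via injectivity of $R$), but the substance is identical.
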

\begin{proof}
By the above arguments, the right-hand-side of~\eqref{18} vanishes, the right-hand-side of~\eqref{19} is real and~\eqref{25} holds.

\eqref{25} implies that $\sin(\be+\t_2)=\pm 1$. Since
the right-hand-side of~\eqref{18} vanishes, we have 
\begin{equation}
R(r_1) e^{-i\t_2}+R(r_2)=0.
\label{26}
\end{equation}
We have two possibilities:
\begin{itemize}
\item $r_1=1$. Thus \eqref{26} implies $r_2=1$. From,
\eqref{25} we have $\be+\t_2\equiv \pi/2 \text{ mod }\pi$ and from \eqref{19} we have $\cos \t_2=0$, thus $\t_2=\pi/2$ or $\t_2=3\pi /2$. This gives the lists $(1,1,i)$, $(-1,1,i)$, $(1,1,-i)$ and $(-1,1,-i)$. All of them give raise to the Henneberg surface.
	
\item $r_1\neq 1$. This implies $e^{-i\t_2}=-\frac{R(r_2)}{R(r_1)}$,
which is real. Hence $e^{-i\t_2}=\pm 1$. As the function
$r\mapsto R(r)$ is injective, this implies
$r_1=r_2$ and $\t_2=\pi$ or $r_2=1/r_1$ and $\t_2=0$. Since the right-hand-side of~\eqref{19} is real and~\eqref{25} holds, 
$2\cos \t_2-R(r_1)R(r_2)=0$. But in both cases $2\cos \t_2-R(r_1)R(r_2)$ does not vanish. Hence this possibility
cannot occur.
\end{itemize}
\end{proof}

\subsection{Solutions with complexity $m=2$}
Suppose that a list $(c=e^{i\be},a_1=r_1,a_2=r_2e^{i\t_2},a_3=r_3e^{i\t_3})
\in \esf^1\times \R^+\times (\C^*)^2$ is a solution
of the period problem with 1-sided quotient and associated branched minimal immersion $X$. 
The list that gives rise to $H_2$ is $(\pm i,1,e^{i\pi/3},e^{2i\pi/3})$.

Solving the period problem with 1-sided quotient is
equivalent to solving 
\begin{equation}
\overline{cA_{2}}=-cA_{4},\quad \text{Im}(cA_{3})=0,
\quad
-\frac{\overline{c}}{c}=\frac{a_2}{\overline{a_2}}\frac{a_3}{\overline{a_3}}
\label{k0m2}
\end{equation}
The third equation in~\eqref{k0m2} reduces to 
\begin{equation}
e^{2i(\be+\t_2+\t_3)}=-1.
\label{28}
\end{equation}
\eqref{2.14} can be written as 
\[
P(z)=z^6+A_5z^5+A_4z^4+A_3z^3+A_2z^2+A_1z+A_0,
\]
where 
\begin{eqnarray}
A_2&=&e^{2i(\t_2+\t_3)}+e^{2i\t_2}+e^{2i\t_3}
-R(r_1)R(r_2)e^{i(\t_2+2\t_3)}
-R(r_1)R(r_3)e^{i(2\t_2+\t_3)}
\nonumber
\\
& & 
-R(r_2)R(r_3)e^{i(\t_2+\t_3)},
\label{A32}
\\
A_3&=&\mbox{}\hspace{-.2cm}
2\textstyle{ \left[ R(r_2)\cos \t_3
+R(r_3)\cos \t_2
+R(r_1)\cos(\t_2-\t_3)
-\frac{1}{2}R(r_1)R(r_2)R(r_3)
\right]} e^{i (\t_2+\t_3)},\quad \label{A33}
\\
A_4&=&-(1+e^{2i\t_2}+e^{2i\t_3})
+R(r_1)R(r_2)e^{i\t_2}+R(r_1)R(r_3)e^{i\t_3}
+R(r_2)R(r_3)e^{i(\t_2+\t_3)}.
\label{A34}
\end{eqnarray}
Thus,
\begin{eqnarray}
\overline{cA_{2}}+cA_{4}
&=&2 e^{-i [\beta +2 (\t_2+\t_3)]} F,
\label{29}
\\
cA_3&=&\pm 2i G 
\label{30}
\end{eqnarray}
where 
\begin{eqnarray}
F&=&\textstyle{e^{2 i \t_3}+\left[2 \cos\t _2-R(r_1) R(r_2)\right]e^{i\t_2} 
	-R(r_3)\left[ R(r_1)+ R(r_2)e^{i\t_2}\right]e^{i \t_3},}\label{Fa}
\\
G&=&
\textstyle{
R(r_2)\cos \t_3
+R(r_3)\cos \t_2
+R(r_1)\cos(\t_2-\t_3)
-\frac{1}{2}R(r_1)R(r_2)R(r_3).
}
\label{G}
\end{eqnarray}

\begin{remark}\label{rem12a}
{\rm 
\begin{enumerate}[(I)]
\item From \eqref{G} we deduce that $G$ is real, hence the condition
$\text{Im}(cA_{3})=0$ only holds if and only if $G=0$.
We deduce that a list $(c,a_1,a_2,a_3)$ solves the 1-sided period problem 
if and only if~\eqref{28} holds and $F=G=0$.

\item The expression~\eqref{Fa} is symmetric in 
$(r_2,\t_2),(r_3,\t_3)$. This can be deduced from the 
symmetry of $A_2,A_4$, or directly checked by using the equality 
\begin{equation}
e^{2i\t_j}=2\cos \t_je^{i\t_j}-1,
\label{35}
\end{equation}
which transforms \eqref{Fa}  into 
		\begin{equation}
			F=(1+e^{2i\t_2}+e^{2i\t_3})-R(r_1)
			\sum_{j=2}^3R(r_j)e^{i\t_j}-R(r_2)R(r_3)e^{i(\t_2+\t_3)}.\label{31}
		\end{equation}
\end{enumerate}
}
\end{remark}

\begin{lemma}
	\label{lem12}
If $F=0$, then the coefficient of $R(r_1)$
in~\eqref{31} is non-zero. 
\end{lemma}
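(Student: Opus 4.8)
The plan is to argue by contradiction. Suppose $F=0$ and that the coefficient of $R(r_1)$ in~\eqref{31} also vanishes, i.e.
\[
R(r_2)e^{i\t_2}+R(r_3)e^{i\t_3}=0 .
\]
Feeding this back into~\eqref{31}, the hypothesis $F=0$ collapses to the single identity
\[
1+e^{2i\t_2}+e^{2i\t_3}=R(r_2)R(r_3)\,e^{i(\t_2+\t_3)} .
\]
So the whole lemma reduces to showing that these two displayed equations are incompatible. The point to exploit throughout is that $R$ is \emph{real}-valued, so $R(r_2)e^{i\t_2}$ and $R(r_3)e^{i\t_3}$ are real multiples of unit vectors.

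First I would read off the consequences of the first equation. Taking moduli gives $|R(r_2)|=|R(r_3)|$, and the vanishing of the sum of two real multiples of unit vectors forces, when $R(r_2),R(r_3)\neq 0$, exactly one of two angle relations: either $\t_2\equiv\t_3\pmod{2\pi}$ (with $R(r_3)=-R(r_2)$) or $\t_2\equiv\t_3+\pi\pmod{2\pi}$ (with $R(r_3)=R(r_2)$). Either way one has $e^{2i\t_2}=e^{2i\t_3}$.

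The second, routine step disposes of both non-degenerate possibilities at once. Writing $R:=R(r_2)\neq 0$ and substituting the relevant angle relation into the second identity, the left-hand side becomes $1+2e^{2i\t_3}$, and in both cases $R(r_2)R(r_3)\,e^{i(\t_2+\t_3)}=-R^2e^{2i\t_3}$, so that
\[
1=-(2+R^2)\,e^{2i\t_3}.
\]
Since $2+R^2>1$, the right-hand side has modulus strictly smaller than $1$, contradicting $|e^{2i\t_3}|=1$. This rules out the two cases with $R(r_2)R(r_3)\neq 0$.

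The hard part is the remaining degenerate case $R(r_2)=R(r_3)=0$, that is $r_2=r_3=1$: here the first equation holds trivially and the second reduces to $1+e^{2i\t_2}+e^{2i\t_3}=0$, which \emph{is} solvable and forces $\{e^{2i\t_2},e^{2i\t_3}\}=\{e^{2\pi i/3},e^{-2\pi i/3}\}$. This is precisely the configuration of $H_2$ (all three branch points on the unit circle, at the sixth roots of unity), where the coefficient of $R(r_1)$ genuinely vanishes. Thus the argument proves the conclusion away from this single totally symmetric example, and I expect the delicate point of the write-up to be excluding, or otherwise accounting for, the configuration $r_2=r_3=1$; note that invoking the remaining period conditions $G=0$ and~\eqref{28} does not help here, since $H_2$ satisfies them as well.
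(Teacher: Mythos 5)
Your analysis of the non\-degenerate cases coincides with the paper's own proof: the paper supposes $R(r_2)e^{i\t_2}+R(r_3)e^{i\t_3}=0$, splits into the two possibilities (a) $e^{i\t_2}=e^{i\t_3}$, $R(r_2)=-R(r_3)$ and (b) $e^{i\t_2}=-e^{i\t_3}$, $R(r_2)=R(r_3)$, and in both cases reduces $F=0$ to $1+e^{2i\t_2}\left(r_2^2+\frac{1}{r_2^2}\right)=0$, which is impossible because $\frac{r_2^2}{1+r_2^4}\leq \frac{1}{2}$. Your identity $1=-(2+R^2)e^{2i\t_3}$ is the same equation, since $r^2+r^{-2}=R(r)^2+2$. (One slip in your wording: the right-hand side of that identity has modulus $2+R^2>1$, not smaller than $1$; the contradiction is unaffected.)

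The substantive difference is the degenerate case, and there the gap you found is in the paper, not in your argument. The dichotomy (a)/(b) is valid only when $R(r_2)$ and $R(r_3)$ are both non-zero; the paper's proof silently omits the possibility $R(r_2)=R(r_3)=0$, i.e.\ $r_2=r_3=1$, in which the coefficient of $R(r_1)$ in \eqref{31} vanishes with no restriction on the angles. As you observe, $F=0$ then reduces to $1+e^{2i\t_2}+e^{2i\t_3}=0$, which is solvable, so Lemma~\ref{lem12} as literally stated is false: the list of $H_2$ itself, $(c,a_1,a_2,a_3)=(\pm i,1,e^{i\pi/3},e^{2i\pi/3})$, satisfies $F=G=0$ and \eqref{28}, yet the coefficient of $R(r_1)$ in \eqref{31} is zero (more generally, $r_2=r_3=1$, $\t_2=\pi/3$, $\t_3=2\pi/3$ with arbitrary $r_1$ gives $F=0$, since $F$ no longer involves $r_1$). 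What your argument actually proves, and what the lemma should say, is: if $F=0$ and the coefficient of $R(r_1)$ in \eqref{31} vanishes, then $r_2=r_3=1$ and $1+e^{2i\t_2}+e^{2i\t_3}=0$, i.e.\ $a_2,a_3$ are, up to the antipodal identification, those of the $H_2$ list. This weaker statement then has to be propagated into the proof of Proposition~\ref{lema14}, where the lemma is invoked twice to guarantee $R(r_1)\neq 0$: the exceptional configuration satisfies $\t_2+\t_3\equiv 0$ (mod $\pi$), hence falls inside the case treated in item~2 and must be handled separately there. This is harmless for the statement of that proposition, because under the full period problem the exceptional configuration forces $G=R(r_1)\cos(\t_2-\t_3)=0$ with $\cos(\t_2-\t_3)=\pm\frac12$, hence $r_1=1$, giving exactly $H_2$, the member $\t_2=\pi/3$ of the family; but the proof as written does not cover it.
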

\begin{proof}
Suppose
$R(r_2)e^{i\t_2}+R(r_3)e^{i\t_3}=0$. This leads to one of 
the following two possibilities:  (a) $e^{i\t_2}=e^{i\t_3}$
and $R(r_2)=-R(r_3)$ or else (b) $e^{i\t_2}=-e^{i\t_3}$ and 
$R(r_2)=R(r_3)$. (a) implies $r_3=1/r_2$ and thus,
\eqref{31} gives $F=1 + e^{2i \t_2}(\frac{1}{r_2^2}+r_2^2)$.
(b) implies $r_2=r_3$ and \eqref{31} gives the same 
expression for $F$. In any case, we deduce from $F=0$ 
that $e^{2i\t_2}$ is real negative, hence 
$\frac{r_2^2}{r_2^4+1}=-e^{2i\t_2}=1$. This is impossible, 
since the function $x>0\mapsto \frac{x}{1 + x^2}$ has a 
unique maximum at $x=1$ with value $1/2$. 
\end{proof}

The next result describes a one-parameter family of non-trivial examples 
of complexity $m=2$ different from $H_2$.
\begin{proposition}
	\label{lema14}
Suppose that a list $(c,a_1,a_2,a_3)$ solves the 1-sided 
period problem. Then:
\begin{enumerate}
\item If $r_1=1$, and at least one of $r_2$ or $r_3$ equals
one, then $(c,a_1,a_2,a_3)=(\pm i,1,e^{i\pi/3},
e^{2i\pi/3})$ and the example is $H_2$.
\item If $\t_2+\t_3=0$ (mod $\pi$), then $r_2=r_3$ or $r_2=1/r_3$ and  $(r_1,r_2)$ are given by the following 
functions of $\t_2\in (\frac{\pi}{4},\frac{\pi }{3}]\cup [\frac{2 \pi }{3},\frac{3 \pi }{4}):$
\begin{eqnarray}
R(r_1(\t_2))&=&
\frac{1}{8 \sqrt{2}}\frac{\sqrt{f(\t_2)-3}}
{\cos \t_2 \cos (2 \t_2)} [f(\t_2)+3+4 \cos (2 \t_2)],
\label{40b}
\\
R(r_2(\t_2))&=&-\frac{\sqrt{f(\t_2)-3}}{\sqrt{2}},
\label{41b}
\end{eqnarray}
or else $(r_1,r_2)$ are given by the opposite expressions for both $R(r_1(\t_2)),R(r_2(\t_2))$, which exchange $(r_1,r_2)$ by $(\frac{1}{r_1},\frac{1}{r_2})$. Here,
$f$ is the function 
\begin{equation}
f(\t_2)=\sqrt{1-8 \cos (2 \t_2)-8 \cos (4 \t_2)}.
\label{f}
\end{equation}
\end{enumerate}
\end{proposition}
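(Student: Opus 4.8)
The plan is to prove the two assertions separately, relying throughout on the reduction recorded in Remark~\ref{rem12a}: a list $(c,a_1,a_2,a_3)$ solves the $1$-sided period problem exactly when \eqref{28} holds and $F=G=0$, with $F$ written as in \eqref{31} and $G$ as in \eqref{G}. For assertion~(1) I would set $R(r_1)=0$ and, invoking the symmetry of $F$ and $G$ in $(r_2,\t_2),(r_3,\t_3)$ from Remark~\ref{rem12a}, assume also $R(r_2)=0$ (the hypothesis gives $r_2=1$ or $r_3=1$, and symmetry lets us take $r_2=1$). Then \eqref{31} collapses to $F=1+e^{2i\t_2}+e^{2i\t_3}$ and \eqref{G} to $G=R(r_3)\cos\t_2$. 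If $\cos\t_2=0$ then $e^{2i\t_2}=-1$, whence $F=e^{2i\t_3}\neq0$; so $R(r_3)=0$, i.e.\ $r_3=1$. The equation $F=0$ then says the unit numbers $1,e^{2i\t_2},e^{2i\t_3}$ add to zero, forcing $\{e^{2i\t_2},e^{2i\t_3}\}=\{e^{2\pi i/3},e^{-2\pi i/3}\}$ and hence, after relabelling, $a_2=e^{i\pi/3}$, $a_3=e^{2i\pi/3}$; finally $\t_2+\t_3=\pi$ in \eqref{28} gives $e^{2i\be}=-1$, so $c=\pm i$. This is $H_2$.

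For assertion~(2) the hypothesis $\t_2+\t_3\equiv0\ (\mathrm{mod}\ \pi)$ gives $e^{2i(\t_2+\t_3)}=1$, so \eqref{28} again yields $c=\pm i$. I would then split into $\t_3=-\t_2$ and $\t_3=\pi-\t_2$ (mod $2\pi$), substitute into \eqref{31}, and separate real and imaginary parts. Assuming $R(r_1)\neq0$ and $\sin\t_2\neq0$ (the complementary degenerate cases either return $H_2$ or fall outside the asserted parameter range, and are handled separately), the vanishing of $\text{Im}\,F$ reads $R(r_1)\bigl(R(r_2)\mp R(r_3)\bigr)\sin\t_2=0$, so by injectivity of $R$ one gets $r_2=r_3$ in the first case and $r_2=1/r_3$ in the second. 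This establishes the dichotomy.

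In both cases the remaining two real conditions, $\text{Re}\,F=0$ and $G=0$, reduce to the same pair in $s:=R(r_1)$ and $\rho:=R(r_2)$, namely $1+2\cos2\t_2-2s\rho\cos\t_2-\rho^2=0$ and $2\rho\cos\t_2+s\cos2\t_2-\tfrac12 s\rho^2=0$. Writing $C=\cos2\t_2$ and eliminating $s$ (using $\cos^2\t_2=\tfrac12(1+C)$) yields the quadratic $(\rho^2)^2+3\rho^2+(2C+4C^2)=0$; since $\cos4\t_2=2C^2-1$, its discriminant equals $f(\t_2)^2$ with $f$ as in \eqref{f}, and the only nonnegative root is $\rho^2=\tfrac12\bigl(f(\t_2)-3\bigr)$, which is \eqref{41b}. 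Nonnegativity forces $f\geq3$, i.e.\ $C\in[-\tfrac12,0]$, giving $\t_2\in[\tfrac{\pi}{4},\tfrac{\pi}{3}]\cup[\tfrac{2\pi}{3},\tfrac{3\pi}{4}]$. Back-substituting $\rho$ into $s=\frac{4\rho\cos\t_2}{\rho^2-2C}$ produces a factor $f-3-4C$ in the denominator, which I would rationalize through $(f-3-4C)(f+3+4C)=f^2-(3+4C)^2=-64\,C\cos^2\t_2$ to arrive at \eqref{40b}; the opposite choice of sign for the square root merely replaces $(r_1,r_2)$ by $(1/r_1,1/r_2)$. A final endpoint check pins the half-open intervals: at $\t_2=\tfrac{\pi}{4},\tfrac{3\pi}{4}$ one has $\rho=0$ with the numerator of $s$ nonzero, so $r_1\in\{0,\infty\}$ degenerates and these values are excluded, whereas at $\t_2=\tfrac{\pi}{3},\tfrac{2\pi}{3}$ one gets $\rho=0$ and $s=0$, i.e.\ $H_2$, which is retained.

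I expect the main obstacle to be precisely this elimination-and-rationalization step: keeping the two cases $\t_3=-\t_2$ and $\t_3=\pi-\t_2$ synchronized so that they collapse to one system in $(s,\rho)$, recognizing the discriminant as $f(\t_2)^2$, and matching the rationalized formula for $s$ with the exact closed form \eqref{40b}, all while correctly disposing of the degenerate cases $R(r_1)=0$ and $\sin\t_2=0$ that govern the endpoints of the parameter interval.
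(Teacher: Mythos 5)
Your overall route coincides with the paper's: reduce, via Remark~\ref{rem12a}, to \eqref{28} together with $F=G=0$; extract the dichotomy $r_2=r_3$ or $r_2=1/r_3$ from the vanishing of $\mathrm{Im}\,F$; observe that both cases $\t_3=-\t_2$ and $\t_3=\pi-\t_2$ collapse to the single real system \eqref{45}--\eqref{46} in $s=R(r_1)$, $\rho=R(r_2)$; and solve that system. Your elimination is correct and in fact more explicit than the paper, which states \eqref{40b}--\eqref{41b} without derivation: the quadratic $(\rho^2)^2+3\rho^2+(2C+4C^2)=0$, the identity $9-8C-16C^2=f(\t_2)^2$, the constraint $C\in[-\tfrac12,0]$, and the rationalization $(f-3-4C)(f+3+4C)=-64C\cos^2\t_2$ all check out and reproduce \eqref{40b}--\eqref{41b} exactly; your item~1 argument is essentially the paper's as well.

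The genuine gap is the parenthetical by which you set aside the degenerate cases $R(r_1)=0$ and $\sin\t_2=0$. These are not routine: the paper spends a substantial part of its proof on exactly this point (Lemma~\ref{lem12}, plus two explicit ``no solutions'' arguments for $\sin\t_2=0$, one in each sub-case), because the dichotomy --- and everything downstream --- is only established once these cases are shown to admit no solutions at all. Your proposed resolution, that they ``either return $H_2$ or fall outside the asserted parameter range,'' cannot work even in principle: a solution of the $1$-sided period problem with $\t_2+\t_3\equiv 0\pmod\pi$ lying outside the asserted range would \emph{contradict} the proposition, not complete its proof. And this is precisely where the danger sits. Taking $R(r_1)=0$ and $\t_3=-\t_2$, the conditions $F=G=0$ reduce to $(R(r_2)+R(r_3))\cos\t_2=0$ and $1+2\cos(2\t_2)=R(r_2)R(r_3)$, and the branch $\cos\t_2=0$, $R(r_2)R(r_3)=-1$ is satisfiable because $R$ takes negative values: for instance, the list $(c,a_1,a_2,a_3)=\bigl(i,\,1,\,i(1+\sqrt{2}),\,-i\,\tfrac{\sqrt{17}-1}{4}\bigr)$ has $R(r_2)=2$, $R(r_3)=-\tfrac12$, and gives $P(z)=(z^2-1)(z^2-2iz+1)(z^2-\tfrac{i}{2}z+1)=z^6-\tfrac{5i}{2}z^5+\tfrac{5i}{2}z-1$, so $A_2=A_3=A_4=0$ and all of \eqref{k0m2} holds; yet $\t_2=\pi/2$ and $r_2\notin\{r_3,1/r_3\}$. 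So the case you defer is the crux of the whole statement, and confronting it honestly exposes a real problem (the paper's own dismissal of this case as ``absurd,'' and its appeal to Lemma~\ref{lem12} --- which controls the coefficient of $R(r_1)$, not $R(r_1)$ itself --- stumble at the same spot). A complete proof cannot treat this as a footnote.
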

\begin{proof}
If $r_1=1$, and at least one of $r_2$ or $r_3$ equals
one, then~\eqref{31} gives $1+e^{2i\t_2}+e^{2i\t_3}=0$
and~\eqref{G} gives $R(r_2)\cos \t_3+R(r_3)\cos \t_2=0$. Since at least one of $r_2$ or $r_3$ equals one, then
at least one of $R_2$ or $R_3$ equals zero. In fact, both $R_2=R_3=0$ (because otherwise we get $\cos \t_2=0$ or 
$\cos \t_3=0$, which prevents $1+e^{2i\t_2}+e^{2i\t_3}$
from cancelling), and thus, $r_2=r_3=1$. In this setting,
$1+e^{2i\t_2}+e^{2i\t_3}=0$ leads to $(c,a_1,a_2,a_3)=(\pm i,1,e^{i\pi/3}, e^{2i\pi/3})$, which proves item 1.

Now assume $\t_2+\t_3=0$. Then~\eqref{31},\eqref{G} give respectively
\begin{eqnarray}
1+2\cos(2\t_2)-R(r_2)R(r_3)&=&R(r_1)[R(r_2)e^{i\t_2}+R(r_3)e^{-i\t_2}],\label{40a}
\\
(R(r_2)+R(r_3))\cos \t_2+R(r_1)\cos(2\t_2)
&=&\frac{1}{2}R(r_1)R(r_2)R(r_3).\label{41}
\end{eqnarray}
Observe that $R(r_1)$ cannot vanish by Lemma~\ref{lem12}
(another reason is that otherwise, \eqref{41}
gives $\cos\t_2=0$, and~\eqref{40a} gives 
$-1-R(r_2)R(r_3)=0$ which is absurd). From~\eqref{40a} we
deduce that $R(r_2)e^{i\t_2}+R(r_3)e^{-i\t_2}$ is real.
This implies that $[R(r_2)-R(r_3)]\sin \t_2=0$. We claim 
that $\sin \t_2\neq 0$; otherwise $\t_2\equiv 0$ (mod $\pi$) and~\eqref{40a},\eqref{41} give the system
\begin{eqnarray}
3-R(r_2)R(r_3)&=&\pm R(r_1)[R(r_2)+R(r_3)],\nonumber
	\\
R(r_1)\pm (R(r_2)+R(r_3))
&=&\frac{1}{2}R(r_1)R(r_2)R(r_3),\nonumber
\end{eqnarray}
(with the same choice for signs), which can be easily seen not to have solutions.

Thus,  $\sin \t_2\neq 0$ hence $R(r_2)=R(r_3)$ and
$r_2=r_3$. In this setting, \eqref{40a},\eqref{41} reduce to
\begin{eqnarray}
1+2\cos(2\t_2)-R(r_2)^2&=&2R(r_1)R(r_2)\cos \t_2,\label{45}
	\\
2R(r_2)\cos \t_2+R(r_1)\cos(2\t_2)
	&=&\frac{1}{2}R(r_1)R(r_2)^2.\label{46}
\end{eqnarray}

If we assume $\t_2+\t_3=\pi$, then~\eqref{31},\eqref{G} give respectively
\begin{eqnarray}
1+2\cos(2\t_2)+R(r_2)R(r_3)&=&R(r_1)[R(r_2)e^{i\t_2}-R(r_3)e^{-i\t_2}],\label{43bis}
\\
(-R(r_2)+R(r_3))\cos \t_2-R(r_1)\cos(2\t_2)
&=&\frac{1}{2}R(r_1)R(r_2)R(r_3).\label{44bis}
\end{eqnarray}
Again, $R(r_1)$ can not vanish due to Lemma~\ref{lem12}. From \eqref{43bis}  
we deduce that $R(r_2)e^{i\t_2}-R(r_3)e^{-i\t_2}$ is real.
This implies that $[R(r_2)+R(r_3)]\sin \t_2=0$. We claim 
that $\sin \t_2\neq 0$; otherwise $\t_2\equiv 0$ (mod $\pi$) and~\eqref{43bis},\eqref{44bis} 
give the system
\begin{eqnarray}
3+R(r_2)R(r_3)&=&\pm R(r_1)[R(r_2)-R(r_3)],\nonumber
	\\
-R(r_1)\pm (-R(r_2)+R(r_3))
&=&\frac{1}{2}R(r_1)R(r_2)R(r_3),\nonumber
\end{eqnarray}
(with the same choice for signs), which again has no solutions.
Thus,  $\sin \t_2\neq 0$ hence
$R(r_2)=-R(r_3)$ and $r_2=1/r_3$. In this setting,
\eqref{40a},\eqref{41} reduce again to \eqref{45} and \eqref{46}.

The system \eqref{45},\eqref{46} has two equations and 
three unknowns $r_1,r_2,\t_2$. Next we describe its solutions. Consider the function $f$ given by~\eqref{f}.
Then, 
\[
f(\pi-\t_2)=f(\t_2), \mbox{ for each }\t_2,\quad 
f(\t_{2,0})=0=f(\pi-\t_{2,0}),
\]
 where $\t_{2,0}=\frac{1}{2} \cot ^{-1}\left(\frac{9}{\sqrt{32 \sqrt{10}+95}}\right)\sim 0.499841$, and
the domain of $f$ is $[\t_{2,0},\pi-\t_{2,0}]+\pi \Z$.
The set $\{ \t_2\in [\t_{2,0},\pi-\t_{2,0}] \ | \ f(\t_2)\geq 3\}$ equals
$A:=[\frac{\pi }{4},\frac{\pi }{3}]\cup 
[\frac{2 \pi }{3},\frac{3 \pi }{4}]$. 

The unique solution $(r_1,r_2)$ to the system \eqref{45},\eqref{46} as a 
function of $\t_2$ is given by \eqref{40b}, \eqref{41b} and the opposite
expressions for both $R(r_1(\t_2)),
R(r_2(\t_2))$, which exchange $(r_1,r_2)$ by $(\frac{1}{r_1},\frac{1}{r_2})$.
\end{proof} 

\subsubsection{The one-parameter family of examples in item~2 of 
Proposition~\ref{lema14}}
\label{sec6.2.1}
Observe that the map $\t_2\in (\frac{\pi}{4},\frac{\pi}{3}]\mapsto \pi -\t_2\in 
[\frac{2 \pi }{3},\frac{3 \pi }{4})$ is a diffeomorphism.
Using the notation in item 2 of Proposition~\ref{lema14}, 
for each $\t_2\in (\frac{\pi }{4},\frac{\pi }{3}]$, we have
\begin{equation}
R(r_1(\pi -\t_2))=-R(r_1(\t_2)),\qquad 
R(r_2(\pi -\t_2)))=R(r_2(\t_2)).
\label{2f}
\end{equation}
Each of these lists with $\t_2\in (\frac{\pi }{4},\frac{\pi }{3}]
\cup [\frac{2\pi }{3},\frac{3\pi }{4})$
solves the 1-sided period problem, hence it defines a 
non-orientable, branched minimal surface $H(\t_2)$.
Furthermore, \eqref{2f} implies that 
\begin{equation}
r_1(\pi-\t_2)=\frac{1}{r_1(\t_2)},\qquad 
r_2(\pi-\t_2)=r_2(\t_2).
\label{2fa}
\end{equation}
We claim the surfaces $H(\t_2)$ and $H(\pi-\t_2)$ are congruent. To see this, note that
the set of points $\{a_j,-1/\overline{a_j}\ : \ j=1,2,3\}$ that defines $f$ through \eqref{2.13} and 
generates the surface $H(\t_2)$, is:
\begin{equation}\label{n56}
    \left\lbrace r_1,\frac{-1}{r_1},r_2 e^{i\theta_2},\frac{1}{r_2}e^{i(\pi+\theta_2)},r_2 e^{-i\theta_2},\frac{1}{r_2}e^{i(\pi-\theta_2)}\right\rbrace.
\end{equation}
The analogous set of points for the surface $H(\pi-\t_2)$ is given through \eqref{2fa}:
\[
 \left\lbrace \frac{1}{r_1}, -r_1 ,-r_2 e^{-i\theta_2},\frac{1}{r_2}e^{-i\theta_2},-r_2 e^{-i\theta_2},\frac{1}{r_2}e^{i\theta_2}\right\rbrace,
\]
which is up to sign the set described in \eqref{n56}. Therefore, the function $f$ defined by equation~\eqref{2.13}
and the corresponding function $\wt{f}$ defined by the same formula for the surface $H(\pi-\t_2)$ are related by 
$\wt{f}(-z)=-f(z)$, for each $z\in \C$. Using that $\omega=f\, dz$ and $\wt{\omega}=\wt{f}\, dz$ define, via the 
Weierstrass representation~\eqref{1}, related branched minimal immersions $X=(x_1,x_2,x_3)$ for $H(\t_2)$ and $\wt{X}=
(\wt{x}_1,\wt{x}_2,\wt{x}_3)$ for $H(\pi-\t_2)$, we get that $H(\t_2)$ and $H(\pi-\t_2)$ are congruent.

In the sequel, we will reduce our study to the family $\{ H(\t_2)\ | \ \t_2\in (\frac{\pi }{4},\frac{\pi }{3}]\}$.
From~\eqref{40b}, \eqref{41b}
we have
\[
\lim_{\t_2\to \pi/3^-}R(r_1(\t_2))=
\lim_{\t_2\to \pi/3^-}R(r_2(\t_2))=0,
\]
which implies that 
\[
\lim_{\t_2\to \pi/3^-}H(\t_2)=H_2.
\]

We next identify the limit (after rescaling)
of the surfaces $H(\t_2)$ as $\t_2\to \pi/4^+$. 
We first observe that
\begin{equation}
\lim_{\t_2\to \pi/4^+}R(r_1(\t_2))=-\infty ,\qquad 
\lim_{\t_2\to \pi/4^+}R(r_2(\t_2))=0.
\label{52}
\end{equation}
This implies that the branch point $a_1=a_1(\t_2)$ is tending to zero, hence the limit of $H(\t_2)$ when $\t_2\to \pi/4^+$ (if it exists) 
cannot be an example with complexity $m=2$. Intuitively, it is clear than
the complexity cannot increase when taking limits (even with different 
scales), hence by Theorem~\ref{thm15} it is natural to think that the 
limit of suitable re-scalings of $H(\t_2)$ when $\t_2\to \pi/4^+$ be $H_1$. We next formalize this idea.

Another consequence of~\eqref{52} is that the list $(c,a_1,a_2,a_3)=
(i,r_1(\t_2),r_2(\t_2)e^{i\t_2},r_2(\t_2)e^{-i\t_2})$
converges as $\t_2\to \pi/4^+$ to $(c,a_1,a_2,a_3)=
(i,0,e^{i\pi/4},e^{-i\pi/4})$. 
After applying to $H(\t_2)$ a homothety of ratio
$r_1(\t_2)>0$ (which shrinks to zero), the Weierstrass data of the shrunk surface
$r_1(\t_2)H(\t_2)$ is $(g(z)=z,\ r_1(\t_2)f(z))$,
where $f(z)$ is given by~\eqref{2.13}. For $z\in
\C\setminus \{0\}$ fixed,
\[
\begin{array}{l}
{\displaystyle \lim_{\t_2\to \pi/4^+}r_1(\t_2)f(z)}
\stackrel{\eqref{2.13}}{=}
{\displaystyle \lim_{\t_2\to \pi/4^+}r_1(\t_2)\frac{i}{z^{5}}\prod_{j=1}^{3}(z-a_j)\left(z+\frac{1}{\overline{a_j}}\right)
}
\\
=\frac{i}{z^5}\left(z-e^{i\pi/4}\right)
\left(z+e^{i\pi/4}\right)\left(z-e^{-i\pi/4}\right)\left(z+e^{-i\pi/4}\right)
{\displaystyle \lim_{\t_2\to \pi/4^+}}
(z-r_1(\t_2))\left(r_1(\t_2)z+1\right)
\\
=\frac{i}{z^4}\left(z-e^{i\pi/4}\right)\left(z+e^{i\pi/4}\right)\left(z-e^{-i\pi/4}\right)\left(z+e^{-i\pi/4}\right) :=
\wh{f}(z).
\end{array}
\]
Plugging the Weierstrass data $(g(z)=z\wh{f}\,dz)$ into \eqref{1}, we obtain a parametrization of the limit surface
of $r_1(\t_2)H(\t_2)$ as $\t_2\to \pi/4^+$ in polar coordinates $z=r e^{i\t}$:
\begin{equation}
\wh{X}(r e^{i\theta})=
\left(\begin{array}{c}
	\frac{-\sin\theta}{2}(r-\frac{1}{r})+\frac{ \sin (3\theta)}{6}(r^{3}-\frac{1}{r^{3}}) \vspace{0.2cm}
	\\ 
	-\frac{\cos \theta }{2}(r-\frac{1}{r})-\frac{ \cos (3\theta)}{6}(r^{3}-\frac{1}{r^{3}}) \vspace{0.2cm}
	\\
	-\cos\theta\sin\theta(r^{2}+\frac{1}{r^{2}})
\end{array}\right).
\label{limitm=2}
\end{equation}
We claim that this parametrization generates 
the Henneberg surface $H_1$. To see this, observe that if we first 
perform the change of variables $\theta=\widetilde{\theta}+\pi/4$
and then rotate the surface an angle of $-\frac{\pi}{4}$ around the $x_3$-axis, we get
\begin{align*}
\left(\begin{array}{ccc}
    \cos\left(\frac{\pi}{4}\right) & \sin\left(\frac{\pi}{4}\right) & 0\\
    -\sin\left(\frac{\pi}{4}\right) & \cos\left(\frac{\pi}{4}\right) & 0 \\
    0 & 0 & 1
\end{array}\right)\cdot \wh{X}(r e^{i\left(\widetilde{\theta}+\frac{\pi}{4}\right)})=-\left(\begin{array}{c}
	\frac{\cos\widetilde{\theta}}{2}(r-\frac{1}{r})-\frac{ \cos (3\widetilde{\theta})}{6}(r^{3}-\frac{1}{r^{3}})
	\\
	-\frac{\sin \widetilde{\theta} }{2}(r-\frac{1}{r})-\frac{ \sin (3\widetilde{\theta})}{6}(r^{3}-\frac{1}{r^{3}})
	\\
	\frac{\cos(2\widetilde{\theta})}{2}(r^{2}+\frac{1}{r^{2}})
\end{array}\right),
\end{align*}
which is, up to a sign, the parametrization given in \eqref{H1X} for $H_1$.

\begin{figure}[h]
\centering
\includegraphics[width=5.2cm]{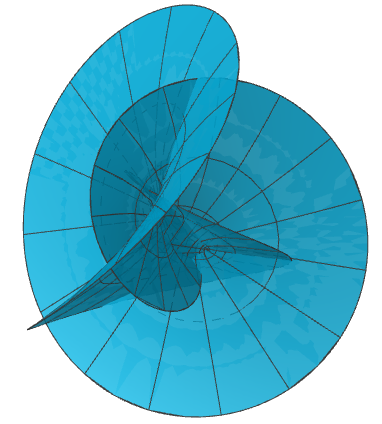}
\includegraphics[width=5.2cm]{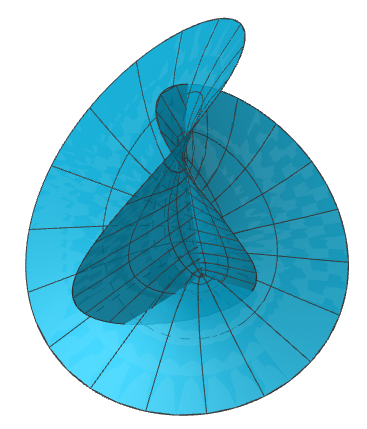}
\includegraphics[width=5.2cm]{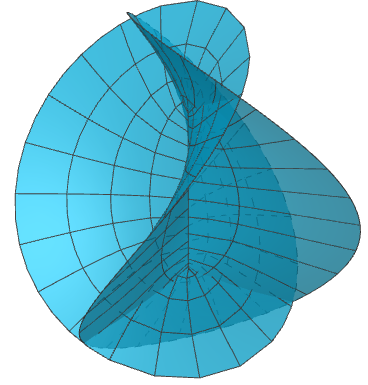}
\caption{Surfaces generated by the previous lists $(c,a_1,a_2,a_3)=
(i,r_1(\t_2),r_2(\t_2)e^{i\t_2},r_2(\t_2)e^{-i\t_2})$ with $\t_2=1$ (left), 
$\t_2=0.83$ (center), $\t_2=0.7854$ (right). 
The limit of $r_1(\t_2)H(\t_2)$ as $\t_2\to \pi/4^+\sim 0.785398$ is the Henneberg surface $H_1$.
}
\label{fig:planar}
\end{figure}

\subsubsection{Around $H_2$ the space of examples with complexity $m=2$ is two-dimensional}
\label{sec6.2.2}

Item~2 of Proposition~\ref{lema14}
defines a non-compact family of non-orientable, branched
minimal surfaces $\{ H(\t_2)\ | \ \t_2\in (\frac{\pi }{4},
\frac{\pi }{3}]\} $ inside the moduli space of examples with complexity $m=2$.
Apparently, the space of solutions for this complexity
has real dimension 2 (the variables are $r_1,r_2,r_3,\t_2,\t_3$, $F=0$ is a 
complex condition  and $G=0$ is a real condition). We can ensure this at 
least around $H_2$ via the implicit function theorem (this is consistent
with item 2 of Proposition~\ref{lema14}, since it imposes the extra 
condition $\t_2+\t_3=0$ mod $\pi$), as we will show next.

Consider the (smooth) period map given by
\[
\begin{array}{rccl}
	P:&(\R^+)^3\times \R^2&\longrightarrow &\R^3\equiv \C\times \R
	\\
	&((r_1,r_2),(r_3,\t_2,\t_3))&\longmapsto & (F(r_1,r_2,r_3,\t_2,\t_3),G(r_1,r_2,r_3,\t_2,\t_3)),
\end{array}
\]
where $F,G$ are given by~\eqref{31}, \eqref{G} respectively. Given 
$(r_1,r_2)\in (\R^+)^2$, let $P^{r_1,r_2}\colon \R^+\times \R^2
\to \R^3$ be the
restriction of $P$ to $\{ (r_1,r_2)\}\times \R^+\times \R^2$. Then,
\begin{equation}
	d(P^{r_1, r_2})_{(r_3,\t_2,\t_3)}\equiv\left(\begin{array}{ccc}
		\displaystyle\frac{\partial \text{Re}(F)}{\partial r_3}&\displaystyle\frac{\partial \text{Re}(F)}{\partial \t_2} &\displaystyle\frac{\partial \text{Re}(F)}{\partial \t_3}\\ \\
		\displaystyle\frac{\partial \text{Im}(F)}{\partial r_3}&\displaystyle\frac{\partial \text{Im}(F)}{\partial \t_2} &\displaystyle\frac{\partial \text{Im}(F)}{\partial \t_3}\\ \\
		\displaystyle\frac{\partial G}{\partial r_3}&\displaystyle\frac{\partial G}{\partial \t_2} &\displaystyle\frac{\partial G}{\partial \t_3}\\ \\
	\end{array}\right).
	\label{50}
\end{equation}
Recall that the list associated to $H_2$ is 
$(r_1,r_2,r_3,\t_2,\t_3)=(1,1,1,\pi/3,
2\pi/3)$. Imposing this choice of parameters and 
computing the determinant of~\eqref{50}
we get
\[
d(P^{1, 1})_{(1,\pi/3,2\pi/3)}=2\sqrt{3}\neq 0.
\]
Thus, the implicit function theorem gives an open neighborhood
$U\subset (\R^+)^2$ of $(r_1,r_2)=(1,1)$, an open set 
$W\subset (\R^+)^3\times \R^2$ with $(r_1,r_2,r_3,\t_2,\t_3)=(1,1,1,\pi/3,
2\pi/3)\in W$ and a smooth map $\varphi \colon U\to \R^3$ such that all the solutions $(r_1,r_2,\t_1,\t_2,\t_3)$ around $(1,1,1,\pi/3,
2\pi/3)$ of the
equation $P(r_1,r_2,\t_1,\t_2,\t_3)=0$ are of the form 
$(\t_1,\t_2,\t_3)=\varphi (r_1,r_2)$. By Remark~\ref{rem12a}(I), 
the list 
\[
(c=e^{i\be(r_1,r_2)},r_1,r_2e^{i\t_2},r_3e^{i\t_3})
\]
with $\be=\be(r_1,r_2)$ given by~\eqref{28}
solves the 1-sided period problem and so, it defines a $1$-sided
branched minimal surface. This produces a
2-parameter deformation of the surface $H_2$ in the moduli space 
of examples with $m=2$ around $H_2$, which in turn describes 
the whole moduli space around $H_2$.

\begin{remark}
{\rm 
A nice consequence of the classical Leibniz formula for 
the derivative of a product is a recursive law that gives
the coefficients of the polynomial $P(z)$ defined
by~\eqref{2.14} in terms of the coefficients of the 
related polynomial for one complexity less. To obtain
this recursive law, we first adapt the notation to the
complexity:
\begin{equation}\label{32}
	P_{m+1}(z):=\prod_{j=1}^{m+1}(z-a_j)\left(z+\frac{1}{\overline{a_j}}\right)=\sum_{h=0}^{2m+2} A_{m+1,h}z^h.
\end{equation}
\eqref{2.15} can now
be written
\begin{equation}\label{33}
	\overline{c A_{m+1,m}}=-c A_{m+1,m+2},\quad \text{Im}(c A_{m+1,m+1})=0.
\end{equation}
We want to find expressions for the above
coefficients $A_{m+1,m},A_{m+1,m+2},A_{m+1,m+1}$, depending only on coefficients of the type $A_{m,h}$
(i.e., for one complexity less). Writing $a_j=r_je^{i\t_j}$ in polar coordinates, observe that
\[
P_{m+1}(z):=P_{m}(z)Q_{m+1}(z),\qquad \mbox{where }
Q_{m+1}(z)=(z-r_{m+1}e^{i\t_{m+1}})\left(z+\frac{e^{i\t_{m+1}}}{r_{m+1}}
\right).
\]
Hence for $h\in \{ m,m+1,m+2\}$,
\begin{eqnarray}
	A_{m+1,h}&=&\frac{1}{h!}P_{m+1}^{(h)}(0)=
	\frac{1}{h!}(P_{m}Q_{m+1})^{(h)}(0)=
	\frac{1}{h!}\sum_{k=0}^h\binom{h}{k}P_{m}^{(k)}(0)
	Q_{m+1}^{(h-k)}(0),\nonumber
\end{eqnarray}
where in the last equality we have used Leibniz formula.
Since $Q_{m+1}$ is a polynomial of degree two, its
derivatives of order three or more vanish. Hence we can reduce the last sum to terms where the index $k$ satisfies $h-k\leq 2$, i.e., $k\in \{ h-2,h-1,h\}$ and thus,
\begin{eqnarray}
	A_{m+1,h}&=&
	\frac{1}{h!}\textstyle{\left[
		\binom{h}{h-2}P_{m}^{(h-2)}(0)Q_{m+1}''(0)
		+\binom{h}{h-1}P_{m}^{(h-1)}(0)Q_{m+1}'(0)
		+\binom{h}{h}P_{m}^{(h)}(0)Q_{m+1}(0)\right]}\nonumber
	\\
	&=&
	\frac{1}{h!}\textstyle{\left[
		\frac{h!}{(h-2)!2}P_{m}^{(h-2)}(0)\cdot 2
		-h\, P_{m}^{(h-1)}(0)R(r_{m+1})e^{i\t_{m+1}}
		-P_{m}^{(h)}(0)e^{2i\t_{m+1}}\right]}\nonumber
	\\
	&=&
	\textstyle{\left[
		\frac{1}{(h-2)!}P_{m}^{(h-2)}(0)
		-\frac{1}{(h-1)!}P_{m}^{(h-1)}(0)R(r_{m+1})e^{i\t_{m+1}}
		-\frac{1}{h!}P_{m}^{(h)}(0)e^{2i\t_{m+1}}\right]}\nonumber
	\\
	&=&
	A_{m,h-2}-A_{m,h-1}R(r_{m+1})e^{i\t_{m+1}}
	-A_{m,h}e^{2i\t_{m+1}},\label{RL}
\end{eqnarray}
which is the desired recurrence law. \eqref{RL} can be used
to find solutions to~\eqref{33} for complexity $m=3$
besides the most symmetric example $H_3$, but the 
equations are complicated and we will not give them here. 
}
\end{remark}

\bibliographystyle{plain}
\bibliography{bill1}
\end{document}